\newcounter{foo}
\newfont{\blb}{msbm10 scaled\magstep1}
\newfont{\comp}{cmr12 scaled\magstep1}
\newfont{\compb}{cmr10 scaled\magstep2}
\newfont{\sbb}{cmssbx10 scaled\magstep3}
\newfont{\sbbb}{cmssbx10 scaled\magstep5}
\newfont{\sbs}{cmssbx10 scaled\magstep1}
\newtheorem{theorem}{Theorem}
\newtheorem{lemma}{Lemma}
\newtheorem{proposition}{Proposition}
\newenvironment{proof}{{\bf Proof.}}{\hfill{ }\vrule height10pt width5pt depth1pt}
\newtheorem{conjecture}[foo]{Conjecture}
\title{The asymptotics of $r(4,t)$}
\renewcommand\@date{{%
  \vspace{-\baselineskip}%
  \large\centering
  \begin{tabular}{@{}c@{}}
    Sam Mattheus\thanks{E-mail: sam.mattheus@vub.be. Research supported by a Fulbright Visiting Scholar Fellowship and a
    Fellowship of the Belgian American Foundation while the author was a Visiting Scholar at University of California San Diego.}   \end{tabular}%
\quad \quad \quad  \begin{tabular}{@{}c@{}}
    Jacques Verstraete\thanks{E-mail: jacques@ucsd.edu. Research supported by the National Science Foundation FRG Award DMS-1952786.} \\
  \end{tabular}

  \bigskip

\begin{tabular}{c c}
\small Department of Mathematics & \small Department of Mathematics\\
\small Vrije Universiteit Brussel & \small University of California, San Diego\\
\small Pleinlaan 2 & \small 9500 Gilman Drive, La Jolla \\
\small 1050 Brussels, Belgium & \small CA 92093-0112, USA. \\
\end{tabular}

\bigskip
\bigskip

  \today
}}
\begin{document}

\maketitle

\begin{abstract}
\noindent For integers $s,t \geq 2$, the Ramsey number $r(s,t)$ denotes the minimum $n$ such that every $n$-vertex
graph contains a clique of order $s$ or an independent set of order $t$. In this paper we prove
\[ r(4,t) = \Omega\Bigl(\frac{t^3}{\log^4 \! t}\Bigr) \quad \quad \mbox{ as }t \rightarrow \infty\]
which determines $r(4,t)$ up to a factor of order $\log^2 \! t$, and solves a  conjecture of Erd\H{o}s.
\end{abstract}

\section{Introduction}

Ramsey Theory is an area of mathematics underpinned by the philosophy that in any large enough structure, there exists a relative large uniform substructure. The area 
is named after F. P. Ramsey~\cite{R}, but has roots in a variety of branches of mathematics,
including logic, set theory, topology, geometry and number theory. Celebrated results include Schur's Theorem~\cite{Schur} leading to Fermat's Last Theorem modulo primes, Rado's partition regularity~\cite{Rado}, van der Waerden's Theorem~\cite{vdW} on arithmetic progressions and Shelah's Theorem~\cite{Shelah}, and Bourgain's Theorem~\cite{B} on Euclidean distortion in metric Ramsey Theory, to mention a few. The area has grown into a cornerstone of modern combinatorics research, and the central quantities of study are known as Ramsey Numbers.

\medskip

The classical expository example is the statement that amongst any six people, there will be at least three people who all know each other, or at least three people who all do not know each other. In general, for integers $s,t \geq 2$, the {\em Ramsey number} $r(s,t)$ denotes the minimum $n$ such that every $n$-vertex graph contains either a clique of order $s$ or an independent set of size $t$, and the afore-mentioned example is equivalent to the statement $r(3,3) \leq 6$.  The quantities $r(s,t)$ are the cornerstone of Ramsey Theory, and have been studied for many decades;
their existence is proved by Ramsey's Theorem~\cite{R} -- see also the book by Graham, Rothschild, Solymosi and Spencer~\cite{GRSS}. The quantities $r(t,t)$ are sometimes referred to as {\em diagonal Ramsey Numbers} -- see Campos, Griffiths, Morris, Sahasrabudhe~\cite{CGMS} for a recent major breakthrough -- whereas $r(s,t)$ for fixed $s$ and $t \rightarrow \infty$ are referred to as {\em off-diagonal Ramsey numbers}. The focus of this paper is on off-diagonal Ramsey numbers. 

\medskip

The original upper bounds on off-diagonal Ramsey numbers were given by Erd\H{o}s and Szekeres~\cite{ES} in 1935. They showed that for all fixed $s \geq 3$ and $t \rightarrow \infty$ we have $r(s,t) = O(t^{s-1})$.
Ajtai, Koml\'{o}s and Szemer\'{e}di~\cite{AKS} established the first improvement to this upper bound on $r(s,t)$ by analyzing a randomized greedy algorithm for producing large independent sets. Bohman and Keevash~\cite{BK10} proved a lower bound by analyzing the random $K_s$-free graph process, improving on earlier results of Spencer~\cite{Spencer1,Spencer2}. These bounds are as follows: for $s \ge 3$, there exists a constants $c_1(s),c_2(s) > 0$ such that the off-diagonal Ramsey numbers satisfy
\begin{equation} \label{rsn}
	c_1(s) \frac{t^{\frac{s+1}{2}}}{(\log t)^{\frac{s + 1}{2} - \frac{1}{s-2}}} \; \; \leq  \; \; r(s,t) \; \; \leq  \; \; c_2(s) \frac{t^{s-1}}{(\log t)^{s - 2}}.
\end{equation}

\medskip

Extending ideas of Shearer \cite{Sh}, the upper bound was further improved by Li, Rousseau and Zang~\cite{LRZ} who showed that as $t \rightarrow \infty$
\begin{equation} \label{rsnupper}
 r(s,t) \; \; \leq \; \; (1 + o(1))\frac{t^{s-1}}{(\log t)^{s - 2}}.
 \end{equation}

\medskip

The only off-diagonal Ramsey numbers $r(s,t)$ for $s \geq 3$ whose order of magnitude
is known is $r(3,t)$, as it was shown in 1995 by Kim~\cite{K} that $r(3,t) = \Omega(t^2/\log t)$ as $t \rightarrow \infty$, matching previous upper bounds
by Ajtai, Koml\'{o}s and Szemer\'{e}di~\cite{AKS} and Shearer~\cite{Sh}, and improving earlier bounds of Spencer~\cite{Spencer1}. The current state of the art is due to Fiz Pontiveros, Griffiths and Morris~\cite{FGM} and Bohman and Keevash~\cite{BK0}, where $r(3,t)$ is determined asymptotically up to a factor four.

\medskip

The current best lower bounds for $r(4,t)$ come from the $K_4$-free process in random graphs, studied by Bohman and Keevash~\cite{BK10},
improving earlier bounds of Spencer~\cite{Spencer2}. With the upper bound (\ref{rsnupper}), the best bounds are, for some absolute constant $a > 0$:
\[ a\frac{t^{\frac{5}{2}}}{\log^2 \! t} \; \; \leq \; \; r(4,t) \; \; \leq \; \; (1 + o(1))\frac{t^3}{\log^2 \! t}.\]

The exponent $5/2$ has stood for more than forty years -- see Spencer~\cite{Spencer1,Spencer2}.

\bigskip

In this paper, we determine $r(4,t)$ up to a factor of order $\log^2 \! t$, and show the exponent is $3$:

\begin{theorem}\label{thm:main}
As $t \rightarrow \infty$,
\[r(4,t) = \Omega\left(\frac{t^3}{\log^4 t}\right).\]
\end{theorem}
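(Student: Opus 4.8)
By the standard correspondence it suffices to produce, for every large $t$, a $K_4$-free graph $G$ on $n = \Omega(t^3/\log^4 t)$ vertices with $\alpha(G) < t$, since such a graph certifies $r(4,t) > n$. The first thing to internalise is that any $K_4$-free graph on $n$ vertices already contains an independent set of size $\Omega\bigl((n\log^2 n)^{1/3}\bigr)$, by the upper bound $r(4,s) = O(s^3/\log^2 s)$ from~(\ref{rsnupper}); so the graph we want is essentially Ramsey-optimal, and in particular, if $G$ has typical degree $d$, then the link $G[N(v)]$ of almost every vertex must be a triangle-free graph with $\alpha\bigl(G[N(v)]\bigr) = O(\sqrt{d\log d})$, i.e.\ itself essentially optimal for $r(3,\cdot)$. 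This is exactly the feature that the $K_4$-free random graph process fails to supply --- it produces links that are too dense --- which is why purely random constructions stall at the exponent $5/2$, and why a structured object is needed.

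\medskip

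The plan is to build $G$ by marrying a rigid algebraic incidence structure to a random layer: the algebra certifies $K_4$-freeness, the randomness makes the independence number near-optimal. For the skeleton, fix a prime power $q$ and a generalized quadrangle $\mathcal Q$ with point set $\mathcal P$ and line set $\mathcal L$; the structural facts we lean on are that $\mathcal Q$ has no triangle --- any three pairwise collinear points are concurrent on one line --- and, dually, that a point collinear with two collinear points lies on their line. The vertex set of $G$ will be (a carefully chosen subconfiguration of) $\mathcal P$, and its edges will be assembled from independent random copies of near-optimal triangle-free Ramsey graphs --- triangle-free graphs $H$ with $\alpha(H) = O(\sqrt{|V(H)|\log|V(H)|})$, as furnished by the known lower-bound constructions for $r(3,t)$ --- attached to the lines and pencils of $\mathcal Q$ so that each neighborhood $N_G(v)$ is itself such a (pseudorandom, triangle-free) graph. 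The payoff of routing everything through $\mathcal Q$ is that $K_4$-freeness becomes essentially automatic: any triangle sitting inside a neighborhood $N_G(v)$ is forced by the no-triangle property of $\mathcal Q$ to lie inside a single triangle-free piece, which is impossible; hence every neighborhood of $G$ is triangle-free and $G$ is $K_4$-free.

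\medskip

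The heart of the matter is the bound $\alpha(G) < t$, which is proved by a first moment estimate over candidate independent sets: the expected number of independent $t$-subsets of $G$ is at most $\binom{n}{t}\,\max_{|S|=t}\Pr[S\text{ is independent}]$. Because the random pieces attached to distinct lines of $\mathcal Q$ are mutually independent, and any two points lie on at most one common line, the event ``$S$ is independent'' factorizes over the lines: $\Pr[S\text{ independent}] = \prod_{\ell}\Pr[\,S\cap\ell\text{ is independent in the piece on }\ell\,]$. Each factor drops to $0$ once $|S\cap\ell|$ exceeds the independence number of that piece, and is otherwise at most roughly $(1-p_0)^{\binom{|S\cap\ell|}{2}}$ with $p_0$ the edge density of the pieces, so $\Pr[S\text{ independent}]$ is governed by $\sum_{\ell}\binom{|S\cap\ell|}{2}$, the number of collinear pairs of $\mathcal Q$ inside $S$. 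What remains is to show that every candidate $t$-set either spans so many collinear pairs that its probability outstrips the factor $\binom{n}{t}$, or is so dispersed over the lines of $\mathcal Q$ that there are too few such sets to matter; tuning $q$, the size and density of the pieces, and the subconfiguration so that the resulting expectation is below $1$ yields an instance of $G$ with $\alpha(G) < t$, and tracking the parameters gives $n = \Omega(t^3/\log^4 t)$.

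\medskip

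The principal obstacle is exactly this dispersion step, not the $K_4$-freeness (which is essentially free from the geometry). A generalized quadrangle carries large cocliques in its collinearity relation --- ovoids and partial ovoids of size a fixed power of $n$ --- and on such a set \emph{every} attached random piece is automatically independent, so the random layer has no grip on it; the naive ``generalized quadrangle plus random triangle-free pieces'' graph therefore has a far too large independence number. Defeating this is where the genuinely new idea enters: one must choose the geometry, pass to the right subconfiguration, and couple in the randomness so that every independent set of $G$ is provably spread over a number of pieces linear in its size --- and only then does the first moment bound close, while keeping the pieces Ramsey-optimal is what makes the exponent come out to be $3$ rather than something smaller. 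Executing this dispersion estimate is, I expect, where essentially all of the difficulty of Theorem~\ref{thm:main} resides.
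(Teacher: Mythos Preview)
What you have written is a plan and a diagnosis, not a proof. You correctly isolate the two tasks --- certify $K_4$-freeness structurally, then control $\alpha(G)$ probabilistically --- and you correctly identify the obstruction: any incidence geometry you build on will carry large ``geometric'' cocliques (ovoids, spreads, partial ovoids) on which the random layer is powerless, because no two points of such a set share a line and hence no attached random piece sees a pair from it. But you then stop exactly at the hard step: you write that ``defeating this is where the genuinely new idea enters'' and that ``executing this dispersion estimate is \dots where essentially all of the difficulty resides,'' without supplying the idea. As written, the plan does not close: in a generalized quadrangle of order $(q,q)$ the collinearity graph has $n\sim q^3$ vertices and ovoids of size $q^2+1$, so any graph $G$ whose edges lie only between collinear points has $\alpha(G)\ge q^2+1\sim n^{2/3}$, no matter what random triangle-free pieces you attach. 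That already caps the exponent you can reach at $3/2$, not $3$. Passing to a ``subconfiguration'' does not help unless you say which one and prove the dispersion estimate for it; that is the theorem.

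The paper's proof supplies precisely the missing structural input, and it is not a generalized quadrangle. The base object is the Hermitian unital $\mathcal H$ in $\mathrm{PG}(2,q^2)$; the vertices are its \emph{secants} (so $n\sim q^4$), and two secants are adjacent when they meet in a point of $\mathcal H$. This graph $H_q$ is a union of $q^3+1$ edge-disjoint cliques of order $q^2$ (the pencils through points of $\mathcal H$), and the crucial algebraic fact --- O'Nan's theorem that $\mathcal H$ contains no O'Nan/Pasch configuration --- forces every $K_4$ in $H_q$ to have three vertices inside a single such clique. Hence one does not need near-optimal triangle-free Ramsey graphs at all: replacing each clique by a \emph{random complete bipartite graph} already kills every $K_4$. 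The independence control is then handled not by a first-moment bound over lines but by showing, via a clique-size trichotomy and the Hoeffding--Azuma inequality, that the random bipartitions preserve pseudorandomness: every $m\sim q^2$-vertex set of the resulting $G_q^*$ still induces $\Omega(m^2/q)$ edges. A Kleitman--Winston/container count then bounds the number of independent $t$-sets for $t\sim q\log^2 q$ by $(q/\log^2 q)^t$, and a final random vertex-sampling at rate $(\log^2 q)/q$ produces a $K_4$-free graph on $\Omega(q^3\log^2 q)$ vertices with no independent set of size $t$, giving $r(4,t)=\Omega(t^3/\log^4 t)$.

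So the gap in your proposal is concrete: the geometry you chose (a generalized quadrangle with points as vertices) provably cannot give exponent $3$ because of ovoids, and the ``dispersion estimate'' you defer is exactly the content of the theorem. The paper's resolution is to change both the geometry (unital, with secants as vertices) and the random layer (bipartite, not triangle-free Ramsey), and to replace the line-by-line first-moment factorisation by a global pseudorandomness-plus-containers argument.
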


This solves a long-standing conjecture of Erd\H{o}s~\cite{Chung}. 
The upper bound for $r(5,t)$ from (\ref{rsn}) is $\widetilde{O}(t^4)$, whereas the lower bound given by (\ref{rsn}) has the same order of magnitude $\widetilde{\Omega}(t^3)$ as the lower bound for $r(4,t)$ in Theorem \ref{thm:main} up to logarithms, leaving the problem of the asymptotics of $r(5,t)$ as a tantalizing open problem.

\bigskip

Theorem \ref{thm:main} also gives almost tight bounds on multicolor Ramsey numbers: for $k \geq 2$
let $r_k(4;t)$ denote the minimum $n$ such that every $k$-coloring of the edges of $K_n$ contains a monochromatic $K_4$ in one of the
first $k - 1$ colors or a monochromatic $K_t$ in the last color. In particular, $r_2(4;t) = r(4,t)$. The upper bound $r_k(4;t) = O(t^{2k-1}/(\log t)^{2k-2})$ was proven by   He and Wigderson~\cite{HeW}, generalizing the result for $r_k(3;t)$ due to Alon and R\"odl~\cite{AR}. Using the approach of Alon and R\"odl~\cite{AR}, we establish a lower bound on $r_k(4;t)$ which is sharp up to polylogarithmic factors:

\begin{theorem} \label{thm:main2}
For each $k \geq 3$, as $t \rightarrow \infty$,
\[ r_k(4;t) = \Omega\left(\frac{t^{2k - 1}}{(\log t)^{6(k-1)}}\right).\]
\end{theorem}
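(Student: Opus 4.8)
The plan is to follow the Alon--R\"odl method for lifting a strong off-diagonal construction to many colours, feeding it the $K_4$-free graph produced by Theorem~\ref{thm:main}, and to arrange the construction so that it reverses, level for level, the recursion behind the He--Wigderson bound $r_k(4;t)=O(t^{2k-1}/(\log t)^{2k-2})$. In that recursion a $k$-colouring of $K_N$ with colours $1,\dots,k-1$ being $K_4$-free is shown to contain a monochromatic $K_t$ in colour $k$ by choosing a high-degree vertex in colour $1$ and recursing inside its neighbourhood, where colour $1$ is now \emph{triangle}-free, so that each $K_4$-free colour ``costs two levels'' and the independence number of a union of $k-1$ $K_4$-free graphs is forced up to roughly $N^{1/(2k-1)}$. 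Reversing this, I would build the colouring so that colour $1$ is a near-regular $K_4$-free graph of as large a degree as possible and so that the colouring restricted to each of its (triangle-free) neighbourhoods is, recursively, a good colouring with one triangle-free and $k-2$ $K_4$-free colours. The building block is exactly the graph of Theorem~\ref{thm:main}: a $K_4$-free graph $\Gamma$ on $\Omega(s^3/\log^4 s)$ vertices with $\alpha(\Gamma)<s$, which its proof lets one take roughly $s^2$-regular with every neighbourhood $\Gamma[N_\Gamma(v)]$ triangle-free of independence number $O(s\cdot\mathrm{polylog}\,s)$ --- i.e.\ each neighbourhood is itself a near-extremal $r(3,\cdot)$-graph, and likewise these triangle-free graphs have near-optimal structure one level deeper.

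Concretely I would assemble the $k-1$ $K_4$-free colour classes on a ``flag''-type vertex set over $\Gamma$ (incident chains, or truncated walks, of length $k-2$), of size about $s^3\cdot s^{2(k-2)}=s^{2k-1}$, realising each colour as an independent blow-up of $\Gamma$ (or of its neighbourhood graphs) along one layer, with colour $k$ the complement. Keeping colours $1,\dots,k-1$ $K_4$-free is the easy part: an independent blow-up of a $K_4$-free graph is again $K_4$-free, and a lexicographic tie-break between overlapping layers ensures that no monochromatic $K_4$ can be manufactured from a monochromatic triangle of one layer together with a single edge of another --- this ``assemble a $K_4$ from a triangle plus an edge'' phenomenon being the one genuine pitfall of naive product constructions in the $K_4$ setting, and the reason the step must route each new colour through a $K_4$-free scaffold.

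The crux, and the step I expect to be the main obstacle, is bounding the clique number of colour $k$, equivalently the independence number of the union of colours $1,\dots,k-1$. Such an independent set projects on each layer to an independent set of $\Gamma$, and the danger is that it be a full ``grid'', a product of large independent sets across the $k-1$ layers; a naive count bounds this only by $s^{k-1}$, which would collapse the exponent to $2+o(1)$ --- the familiar defect of lexicographic products, and the reason a direct product of copies of the Theorem~\ref{thm:main} graph is not enough. Ruling out such grids requires genuinely interleaving the layers, and here the fine structure of $\Gamma$ is essential: one needs codegree-type estimates showing that large independent sets of $\Gamma$ do not cluster inside common neighbourhoods, together with their analogues one level deeper for the near-extremal triangle-free neighbourhood graphs, so that the grid is forced to be thin, of total size $O(s\cdot\mathrm{polylog}\,s)$. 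Granting this, setting all scales equal to $s\approx t$ and tracking the logarithmic losses --- a loss of order $(\log t)^{6}$ incurred at each of the $k-1$ levels, combining the $\log^4$ of Theorem~\ref{thm:main} with the losses from the triangle-free restrictions --- yields $r_k(4;t)=\Omega(t^{2k-1}/(\log t)^{6(k-1)})$. Everything else --- the choice of scales, the colour bookkeeping, and the verification of $K_4$-freeness --- is a routine implementation of the Alon--R\"odl scheme once Theorem~\ref{thm:main} and the local structure of its extremal graph are in hand.
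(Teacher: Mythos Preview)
Your proposal has a genuine gap, and it also takes a much harder route than the paper does. You invoke the name ``Alon--R\"odl'' but then describe something quite different: a layered, flag-type deterministic construction in which each colour is a blow-up along a different coordinate, with the independence bound to be extracted from fine local properties of the Theorem~\ref{thm:main} graph (near-regularity, every neighbourhood a near-extremal $r(3,\cdot)$ graph, codegree control of large independent sets). None of those local properties is established in the paper --- the final graph in Theorem~\ref{thm:main} is a \emph{random vertex sample} of $G_q^*$, with no control on degrees, neighbourhood independence numbers, or codegrees --- and you yourself flag the crucial step as ``the main obstacle'' and then write ``Granting this''. That is precisely the step where the exponent $2k-1$ is won or lost, and you have not done it; your own remark that a naive product collapses the exponent to $2$ shows how delicate it is.

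The paper's actual argument is the standard Alon--R\"odl \emph{random permutation} method, and it needs nothing about $\Gamma$ beyond the single parameter bound $\alpha(\Gamma)<s$ with $|V(\Gamma)|=T=\Omega(t^3/\log^7 t)$ (taking $s=\lfloor t/\log t\rfloor$). One takes the $r$-blowup $\Gamma(r)$ and uses the elementary count that $\Gamma(r)$ has at most $\binom{T}{s}(sr)^t/t!$ independent $t$-sets; then overlays $k-1$ uniformly random permuted copies of $\Gamma(r)$ on the same vertex set, so that the expected number of $t$-sets independent in \emph{all} copies is
\[
\Bigl(\tfrac{\binom{T}{s}(sr)^t}{t!}\Bigr)^{k-1}\binom{rT}{t}^{-(k-2)}.
\]
With $r\asymp t^{2(k-2)}/(\log t)^{6k-13}$ this is $<1$, and one reads off $r_k(4;t)\ge rT=\Omega(t^{2k-1}/(\log t)^{6(k-1)})$. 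The whole argument is a short calculation; no recursion, no neighbourhood structure, no codegree estimates. The moral is that the ``grid obstruction'' you worry about is handled \emph{probabilistically} by randomising the overlay, not structurally by engineering the base graph --- and the only input required from Theorem~\ref{thm:main} is the headline bound itself.
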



\bigskip

The constructive methods of this paper are inspired by the approaches of Alon and R\"{o}dl~\cite{AR} and Mubayi and the second author~\cite{MV},
and may be useful for providing lower bounds on
other graph Ramsey numbers, for example cycles versus cliques -- see Conlon, Mubayi and the authors~\cite{CMMV}. The constructions in~\cite{C,MV,CMMV}
rely on point-line incidence graphs from finite geometry and random sampling. Our construction for $r(4,t)$ in this paper relies on unitals in finite geometry,
and hence has a substantial non-probabilistic aspect, unlike the afore-mentioned constructions for $r(s,t)$, which rely heavily on random graphs.

\subsection{Organization}

This paper is organized as follows. In Section \ref{sec:pg}, for each prime power $q$, we describe the {\em classical} or {\em Hermitian unitals},
from which we obtain a partial linear space with $q^3 + 1$ points and $n = q^2(q^2 - q + 1)$ lines, which we call {\em secants}, each containing $q + 1$ of the points.

\bigskip

We define in Section 2 the graph $H_q$ whose vertex set is the set of $n$ secants to the unital, and where two secants are adjacent if they intersect in a point of the unital.
Thus $H_q$ is a union of $q^3 + 1$ edge-disjoint cliques of order $q^2$ with $n = q^2(q^2 - q + 1)$ vertices. The graph $H_q$ has the key property (due to O'Nan~\cite{onan})
that all $K_4$s have at least three vertices inside the designated cliques of order $q^2$. What remains is to modify $H_q$ to remove all these $K_4$s while controlling the independence number. The structure of $H_q$ is discussed at length in Section 2.

\bigskip

In Section 3, we construct the random $n$-vertex $K_4$-free graph $H_q^*$, as a union of random complete bipartite subgraphs, one from each maximal clique in $H_q$. This type of ``random block construction'' was first introduced by Brown and R\"{o}dl~\cite{BR}, and considered by Dudek and R\"{o}dl~\cite{DR}, Wolfovitz~\cite{W}, Dudek, Retter and R\"{o}dl~\cite{DRR}, Kostochka, Mubayi and the second author~\cite{KMV}, Conlon~\cite{C}, Mubayi and the second author~\cite{MV}, and Gowers and Janzer~\cite{GJ}. 
The main theorem in Section 3 states that if $q$ is large enough, then all sets of $2^{24}q^2$ vertices of $H_q^*$ induce at least $2^{40} q^{3}$ edges in $H_q^*$ with positive probability. This is proved in Section \ref{sec:hqstar} using the Hoeffding-Azuma inequality. In particular, we address a remark of Conlon on
optimal pseudorandomness of the triangle-free graphs defined in~\cite{C}.

\bigskip

Fixing such an instance $G_q^*$, we use a theorem of Kohayakawa, Lee, R\"{o}dl and Samotij~\cite{KLRS} in Section \ref{sec:container} to show that the number of independent sets of size $t = \lceil 2^{30}q \log^2\!q\rceil$ in $G_q^*$ is at most $(q/\log^2\! q)^t$. This is an alternative but related approach to the spectral approach of Mubayi and the second author~\cite{MV} and Alon and R\"{o}dl~\cite{AR} -- see Samotij~\cite{Sam} for a survey on methods for counting independent sets in graphs, and also see Axenovich, Brada\v{c}, Gishboliner, Mubayi and Weber~\cite{ABGMW} which has similar ideas. Finally, in Section \ref{sec:proofofmain}, by randomly sampling vertices of $G_q^*$ as in~\cite{MV} with probability $(\log^2\! q)/q$, we arrive at a $K_4$-free graph with at least $(q^3 \log^2\! q)/2$ vertices and no independent sets of size $t$, and this proves $r(4,t) \geq c t^3/\log^4\! t$ for some constant $c > 0$. We did not expend effort in optimizing the value of $c$; from the proof $c = 2^{-100}$ will do.

 \bigskip

Finally, in Section \ref{sec:proofofmain2} we use Theorem \ref{thm:main} and the ideas of Alon and R\"{o}dl~\cite{AR}, based on random blowups, to prove Theorem \ref{thm:main2}.

 \bigskip

 We use the following graph-theoretic notation.  Let $G$ be a graph and denote by $V(G)$ and $E(G)$ the vertex set and edge set of $G$ respectively, and $e(G) = |E(G)|$. For $X \subseteq V(G)$, let $G[X]$ denote the subgraph of $G$ induced by $X$, and $e(X) = e(G[X])$ when the graph $G$ is clear from the context.

\section{Unitals and the O'Nan configuration}\label{sec:pg}

A {\em unital} in the projective plane $\mathrm{PG}(2,q^2)$ is a set $\mathcal{U}$ of $q^3 + 1$ points such that every line of $\mathrm{PG}(2,q^2)$ intersects $\mathcal{U}$ in $1$ or $q + 1$ points. Lines will be referred to as {\em tangents} or {\em secants} respectively. A {\em classical} or {\em Hermitian unital} $\mathcal{H}$ is a unital described in homogeneous co-ordinates as the following set of one-dimensional subspaces of $\mathbb F_{q^2}^3$:
\[ \mathcal{H} = \{\left<x,y,z\right> \subset \mathbb F_{q^2}^3 : x^{q + 1} + y^{q + 1} + z^{q + 1} = 0\}.\]
Here arithmetic is in the finite field $\mathbb F_{q^2}$, and $\left<x,y,z\right>$ is the one-dimensional subspace of $\mathbb F_{q^2}^3$ generated by $(x,y,z) \neq 0$. The set $\mathcal{H}$ is the set of {\em absolute points} of a {\em unitary polarity} -- see Barwick and Ebert~\cite{BE} for a monograph.
We may consider the partial linear space whose points are the points of $\mathcal{H}$ and the lines are the secants to $\mathcal{H}$. Combinatorially, the lines form a {\em design} or {\em Steiner $(q + 1)$-tuple system}: every pair of points of $\mathcal{H}$
is contained in exactly one of the lines.

\bigskip

One of the remarkable features of this partial linear space is that it does not contain the
so-called {\em O’Nan configuration}, namely the configuration of
four lines meeting in six points shown in the figure on the left below~\cite{onan}:

\begin{center}
\includegraphics[width=3.5in]{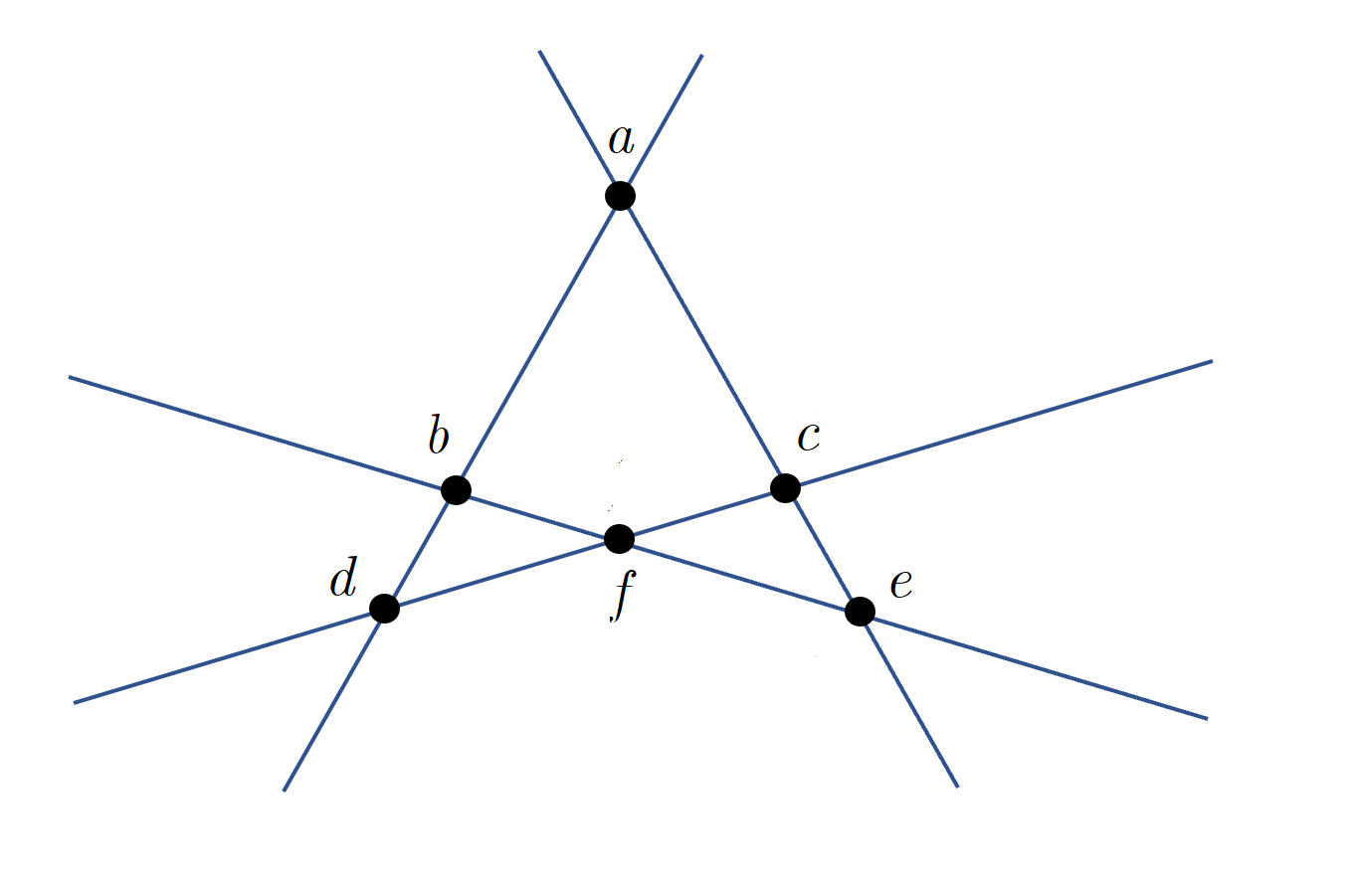}
\vspace{-0.2in}
\begin{center}
O'Nan or Pasch configuration\label{fig:pasch}
\end{center}
\end{center}

\bigskip

In the combinatorial design theoretic literature, this configuration is also referred to as the {\em Pasch configuration}.
Piper~\cite{P} conjectured that the Hermitian unital is characterized among all such Steiner systems
by the absence of O’Nan configurations. We give a short proof here that $\mathcal{H}$ does not contain these configurations, first proved
by O'Nan~\cite{onan}:

\bigskip

\begin{proposition}\label{thm:no-onan}
The Hermitian unital does not contain O'Nan configurations.
\end{proposition}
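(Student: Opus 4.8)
The plan is to argue directly in coordinates using the unitary polarity. Recall that $\mathcal H$ is the set of absolute points of the Hermitian polarity $\perp$ on $\mathrm{PG}(2,q^2)$ induced by the sesquilinear form $\langle (x_1,y_1,z_1),(x_2,y_2,z_2)\rangle = x_1 x_2^q + y_1 y_2^q + z_1 z_2^q$; a point $P$ lies on $\mathcal H$ iff $P \in P^\perp$, and for $P \in \mathcal H$ the line $P^\perp$ is the unique tangent to $\mathcal H$ at $P$. The key algebraic fact I would use is that a secant line $\ell$ meeting $\mathcal H$ in $q+1$ points has a well-defined "pole" $\ell^\perp$, a point \emph{not} on $\mathcal H$, and that three of the $q+1$ points of $\mathcal H$ on $\ell$ are never collinear in any degenerate way — more usefully, that the polarity is an involution swapping points and lines and preserving incidence.

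First I would set up the O'Nan (Pasch) configuration combinatorially: six points $A,B,C,D,E,F$ of $\mathcal H$ and four secants, say $\ell_1 = ABC$, $\ell_2 = ADE$, $\ell_3 = BDF$, $\ell_4 = CEF$, where each of the six points lies on exactly two of the four lines and each line carries exactly three of the six points (the remaining $q-2$ points of $\mathcal H$ on each secant are irrelevant). The goal is to derive a contradiction. The standard route, which I would follow, is to apply the polarity: the four secants $\ell_1,\ell_2,\ell_3,\ell_4$ have poles $\ell_1^\perp,\dots,\ell_4^\perp$, and the incidence $P \in \ell_i$ for $P \in \{A,\dots,F\}$ dualizes to $\ell_i^\perp \in P^\perp$, i.e. the pole of each secant lies on the tangent line at each of its points. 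Since each point $P$ of the six lies on two of the secants, its tangent $P^\perp$ passes through the two corresponding poles; so the tangents at $A,\dots,F$ and the poles of $\ell_1,\dots,\ell_4$ form a dual O'Nan configuration. One then counts: this forces certain coincidences among tangent lines or shows that some tangent line would have to meet $\mathcal H$ in more than one point, or that two of the six points share a tangent, which is impossible since distinct absolute points have distinct tangents.

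The cleanest contradiction I would aim for uses the following: the three tangent lines at the three points on a single secant $\ell_i$ all pass through the pole $\ell_i^\perp$, so $\ell_i^\perp$ is a point off $\mathcal H$ lying on (at least) three tangents. Tracking the poles $\ell_2^\perp, \ell_3^\perp, \ell_4^\perp$ through the tangents at $A,B,C$ respectively, together with $\ell_1^\perp$, one obtains three collinear-type relations that, after using that no tangent is a secant and that the polarity is nondegenerate, pin down a $2\times 2$ or $3\times 3$ determinant that must vanish — and showing it cannot vanish (using $x^{q+1}$ being the norm map $\mathbb F_{q^2}\to\mathbb F_q$, which is surjective and multiplicative) finishes it. A slick alternative, which is probably the shortest and which I would actually write up, normalizes coordinates: send three of the six points to a standard position (e.g. $A=\langle 1,0,0\rangle$, and similarly for two others) using the large automorphism group $\mathrm{PGU}(3,q^2)$ of $\mathcal H$, write the secants through them, solve for the remaining three points of $\mathcal H$ as the intersection points, and check that the incidence $D \in \ell_2 \cap \ell_3$, $E \in \ell_2 \cap \ell_4$, $F \in \ell_3 \cap \ell_4$ together with $D,E,F \in \mathcal H$ forces an equation like $\lambda^{q+1} = -1$ with $\lambda$ forced into $\mathbb F_q$, whence $N(\lambda) = \lambda^2 \ne -1$ unless $-1$ is a square in the wrong way, yielding a contradiction on cardinalities.

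The main obstacle is the bookkeeping: choosing the normalization so that the three "free" intersection points land on $\mathcal H$ simultaneously pins down just enough parameters that the last incidence becomes a single field equation, and one must make sure the $\mathrm{PGU}(3,q^2)$-normalization does not secretly use up a degree of freedom needed later. I expect the computation to reduce to showing that a specific element of $\mathbb F_{q^2}$ forced to have norm $-1$ is also forced to lie in $\mathbb F_q$, where its norm is its square, giving $x^2 = -1$ with $x \in \mathbb F_q$; combined with the structure of the configuration this is contradictory (or one simply gets $1 = 0$). Everything else — the dualization via the polarity, the fact that tangents are unique and distinct at distinct absolute points, the surjectivity and multiplicativity of the norm map $x \mapsto x^{q+1}$ — is routine and can be cited from Barwick–Ebert.
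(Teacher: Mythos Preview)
Your proposal sketches two routes but neither one is brought to a contradiction, and the endgame you anticipate for the second is wrong.

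The dualization route is circular as stated. Applying $\perp$ to the four secants and six absolute points yields four non-absolute poles $\ell_1^\perp,\dots,\ell_4^\perp$ and six tangent lines $A^\perp,\dots,F^\perp$; the incidences you track say precisely that the six tangents are the six sides of the complete quadrangle on the four poles. There is nothing contradictory about four non-absolute points in general position whose six joining lines are all tangent to $\mathcal H$: the Hermitian polarity is self-dual, so this dual picture is \emph{equivalent} to the original O'Nan, not a consequence that can be refuted separately. No ``coincidence of tangents'' or ``tangent meeting $\mathcal H$ twice'' is forced by what you wrote.

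The coordinate-normalization route can in principle be made to work, but the contradiction you aim for --- $\lambda^{q+1}=-1$ with $\lambda\in\mathbb F_q$, hence $\lambda^2=-1$ --- is not a contradiction: $-1$ is a square in $\mathbb F_q$ whenever $q$ is even or $q\equiv 1\pmod 4$. Your hedge ``or one simply gets $1=0$'' is the honest statement that you have not actually located where the argument closes.

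The paper's argument avoids both normalization and polarity. Writing $\sigma(u,v)=\sum_i u_i v_i^{\,q}$ and choosing generators so that the collinear triples read $d=a+b$, $e=a+c$, $f=a+b+c$, one forms the $3\times 3$ matrix $M=(\sigma(x,y))_{x,y\in\{a,b,c\}}$. Since $a,b,c$ are non-collinear, $M$ factors as the product of two nonsingular $3\times 3$ matrices over $\mathbb F_{q^2}$ and is therefore nonsingular. On the other hand the six conditions $\sigma(p,p)=0$ for $p\in\{a,b,c,d,e,f\}$ force the diagonal of $M$ to vanish and the off-diagonal entries to satisfy $\sigma(x,y)=-\sigma(y,x)$; expanding along the zero diagonal gives $\det M=\sigma(a,b)\sigma(b,c)\sigma(c,a)+\sigma(a,c)\sigma(c,b)\sigma(b,a)=0$. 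That is the missing idea: the six absolute-point conditions produce exactly enough antisymmetry to kill a $3\times 3$ determinant that is independently known to be nonzero, and this works uniformly for every prime power $q$.
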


\begin{proof}
Label the points $a,b,c,d,e,f$ of the O'Nan configuration as shown in the figure above, where a point $a$ is identified with a chosen generator $(a_1,a_2,a_3) \in \mathbb F_{q^2}^3 \backslash \{0\}$. For $a,b \in \mathbb F_{q^2}^3 \backslash \{0\}$, define
\[ \sigma(a,b) = a_1 b_1^q + a_2 b_2^q + a_3 b_3^q\]
so that $\mathcal{H}$ is precisely the set of $\langle x \rangle$ such that $\sigma(x,x) = 0$. Since $\{a,b,d\},\{a,c,e\},\{c,d,f\}$ and $\{b,e,f\}$ are collinear triples,
we may choose generators for $a,b,c,d,e,f$ satisfying $d = a + b$, $e = a + c$ and $f = a + b + c$.
For convenience, write $a^q = (a_1^q,a_2^q,a_3^q)$. Let $A$ be the matrix whose rows are $a,b$ and $c$ and let $B$ be the matrix whose columns are $a^q,b^q,c^q$. Since $a,b,c$ are not collinear, $A$ is non-singular,  and since $x \mapsto x^q$ is a field automorphism, the matrix $B$ is also non-singular. Therefore
\[ AB = \begin{pmatrix}
	\sigma(a,a)  & \sigma(a,b)  & \sigma(a,c)  \\
	\sigma(b,a)  & \sigma(b,b)  & \sigma(b,c)  \\
	\sigma(c,a)  & \sigma(c,b)  & \sigma(c,c) \\
\end{pmatrix}\]

\bigskip

is also non-singular. On the other hand, since $\sigma(d,d) = 0$, it follows that $\sigma(a,b) = -\sigma(b,a)$, and similarly using $\sigma(e,e) = \sigma(f,f) = 0$, we find \[ \sigma(a,c) = -\sigma(c,a) \quad  \mbox{ and }\quad \sigma(b,c) = -\sigma(c,b).\]
Since $\sigma(a,a) = \sigma(b,b) = \sigma(c,c) = 0$, the diagonal of $AB$ is zero. Therefore
\[ \mbox{det}(AB) = \sigma(a,b)\sigma(b,c)\sigma(c,a) + \sigma(a,c)\sigma(c,b)\sigma(b,a) = 0\]
contradicting that $AB$ is non-singular.
\end{proof}

\bigskip

The bipartite incidence graph of the O'Nan configuration is shown in the figure below:
\begin{center}
\includegraphics[width=2in]{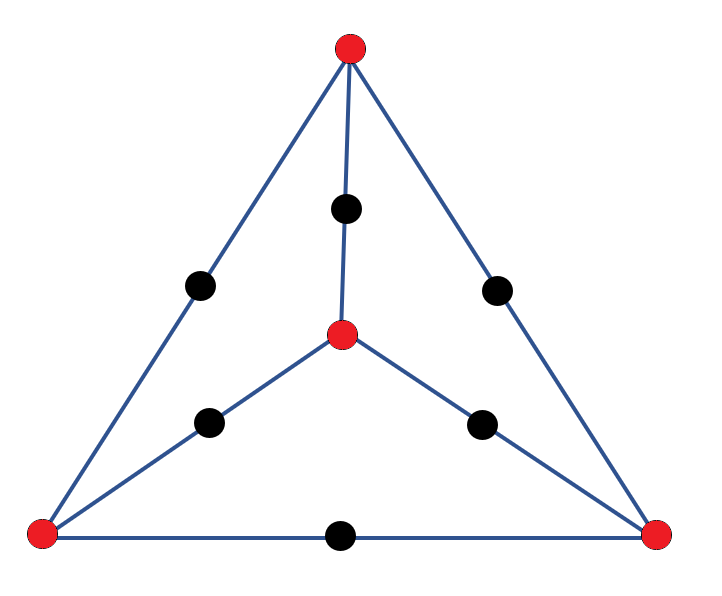}
\begin{center}
The 1-subdivision of $K_4$
\end{center}
\end{center}

In combinatorial terms, this bipartite graph is a {\em 1-subdivision} of $K_4$ -- the red points in the figure correspond to the lines.
The bipartite incidence graph $B_q$ of secants $L$ and points $\cal H$ has $q^2(q^2 - q + 1)$ vertices in $L$ and $q^3 + 1$ vertices in $\cal H$. The vertices in $L$ have degree $q + 1$ and the vertices in $\cal H$ have degree $q^2$, and consequently
$e(B_q) = q^2(q^3 + 1)$. A key property is that $B_q$ does not contain a 1-subdivision of $K_4$ with four vertices in $L$ and six vertices in $\cal H$, since the O'Nan configuration is absent from the Hermitian unital. 
The problem of finding the largest number of edges in a bipartite graph of given order not containing a fixed bipartite subgraph is known as a Zarankiewicz-type problem~\cite{Z}. The bipartite incidence graph $B_q$ is in this sense a near extremal bipartite graph not containing a 1-subdivision of $K_4$ with four vertices in $P$ and six in $\cal H$.
We refer the reader to Janzer~\cite{Janzer2019,Janzer2021}, Conlon, Janzer and Lee~\cite{CJL,ConlonLee} and Jiang and Qiu~\cite{JiangQiu2020,JiangQiu2023} for work on the extremal problem for subdivisions.

\subsection{The graph $H_q$}

We construct a graph $H_q$ on $L = V(H_q)$ in which $\{u,v\}$ is an edge if $u$ and $v$ intersect in a point of the unital, and we list basic properties of $H_q$ in this section. The graph $H_q$ is in fact a strongly regular graph; strongly regular graphs are the subject of extensive research in the literature,
and their properties can be found in the recent monograph of Brouwer and Van Maldeghem~\cite{BV22}. The graph $H_q$ is
denoted {\sf NU}$_3(q^2)$ in the literature (see page 81 in Brouwer and Van Maldeghem~\cite{BV22}).
We record the following lemma listing only the basic properties of $H_q$ we need -- these properties are all verified by O'Nan~\cite{onan} using elementary group theory and finite geometry:

\begin{proposition}\label{thm:basegraph}
The graph $H_q$ is an $n$-vertex $d$-regular graph with
\vspace{-0.25in}
\begin{center}
	\begin{tabular}{cp{6.2in}}
		$({\rm i})$ & $n = q^2(q^2-q+1) = q^4-q^3+q^2$ and $d = (q+1)(q^2-1) = q^3+q^2-q-1$, \\
		$({\rm ii})$ & a set $\cal C$ of $q^3 + 1$ maximal cliques of order $q^2$, every two sharing exactly one vertex,\\
        $({\rm iii})$ & each vertex in exactly $q + 1$ cliques of $\cal C$, \\
		$({\rm iv})$ & every copy of $K_4$ in $H_q$ contains at least three vertices in some clique in $\cal C$.
	\end{tabular}
\end{center}
\end{proposition}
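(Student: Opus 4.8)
The plan is to derive everything from two standard facts about the Hermitian unital $\mathcal{H}$: it has $q^3+1$ points, and its secants form a Steiner system, i.e. every pair of points of $\mathcal{H}$ lies on a unique secant (which then meets $\mathcal{H}$ in $q+1$ points). First I would read off the parameters by double counting. Counting the pairs of points of $\mathcal{H}$ in two ways gives $n\binom{q+1}{2} = \binom{q^3+1}{2}$, hence $n = q^2(q^2-q+1)$ using $q^3+1 = (q+1)(q^2-q+1)$; counting incident pairs (point of $\mathcal{H}$, secant) then shows each point of $\mathcal{H}$ lies on exactly $q^2$ secants. For the degree in $(\mathrm{i})$, note that the neighbours of a secant $\ell$ are precisely the secants through one of the $q+1$ points of $\ell\cap\mathcal{H}$, other than $\ell$ itself; these $(q+1)(q^2-1)$ secants are distinct, since a secant sharing two points of $\mathcal{H}$ with $\ell$ would have to equal $\ell$. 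So $H_q$ is $d$-regular with $d = (q+1)(q^2-1)$, regularity being automatic because the same count applies at every vertex.

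Next I would set up the clique family. For each point $P\in\mathcal{H}$ let $\mathcal{C}_P$ be the set of the $q^2$ secants through $P$; any two of these meet at $P\in\mathcal{H}$, so $\mathcal{C}_P$ is a clique of order $q^2$, and we take $\mathcal{C} = \{\mathcal{C}_P : P\in\mathcal{H}\}$, a family of $q^3+1$ cliques. Two distinct points $P,Q$ of $\mathcal{H}$ determine a unique secant $PQ$, which is the only secant lying in both $\mathcal{C}_P$ and $\mathcal{C}_Q$, giving the pairwise-intersection claim in $(\mathrm{ii})$. For maximality, suppose a secant $m\notin\mathcal{C}_P$ were adjacent to all of $\mathcal{C}_P$; since $m$ misses $P$, the map sending $\ell\in\mathcal{C}_P$ to $\ell\cap m$ lands in $\mathcal{H}\cap m$ and is injective, because $\ell$ is the unique secant through $P$ and through $\ell\cap m$ — contradicting $q^2 = |\mathcal{C}_P| > q+1 = |\mathcal{H}\cap m|$. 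Finally, $(\mathrm{iii})$ is immediate: a secant $\ell$ lies in $\mathcal{C}_P$ exactly when $P\in\ell\cap\mathcal{H}$, so $\ell$ belongs to exactly $|\ell\cap\mathcal{H}| = q+1$ members of $\mathcal{C}$.

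The one part that is not pure counting is $(\mathrm{iv})$, and here the argument is a short case analysis feeding into Proposition \ref{thm:no-onan}. Let $\ell_1,\ell_2,\ell_3,\ell_4$ be the vertices of a $K_4$ in $H_q$, and let $P_{ij}\in\mathcal{H}$ be the point where $\ell_i$ meets $\ell_j$. If some point lies on three of the four secants, those three lie in a common $\mathcal{C}_P$ and we are done; otherwise no point lies on three of the $\ell_i$, and then the six points $P_{ij}$ must be pairwise distinct, since any coincidence $P_{ij}=P_{kl}$ would put that point on at least three of the four lines (in both the overlapping case $\{i,j\}\cap\{k,l\}\neq\emptyset$ and the disjoint case). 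Thus the four secants $\ell_i$ and the six distinct points $P_{ij}\in\mathcal{H}$, with each $\ell_i$ containing the three $P_{ij}$ ($j\neq i$) and each $P_{ij}$ on exactly the two lines $\ell_i,\ell_j$, form an O'Nan configuration inside $\mathcal{H}$ — contradicting Proposition \ref{thm:no-onan}. I do not expect a serious obstacle anywhere; the statement is elementary once the Steiner-system description of the secants and Proposition \ref{thm:no-onan} are in hand. The only places to be careful are the distinctness bookkeeping in $(\mathrm{iv})$ and, in the maximality argument, remembering that a vertex of $H_q$ is a secant rather than a tangent, so that $m\notin\mathcal{C}_P$ genuinely means "a secant avoiding $P$."
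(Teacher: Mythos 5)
Your proof is correct and follows essentially the same route as the paper: define $\mathcal{C}_P$ as the $q^2$ secants through each point $P \in \mathcal{H}$, compute the degree by counting the secants through the $q+1$ points of $\ell \cap \mathcal{H}$, and reduce $(\mathrm{iv})$ to Proposition~\ref{thm:no-onan}. The small differences --- deriving $n$ from a Steiner-system double count rather than from the line count $q^4+q^2+1$ in $\mathrm{PG}(2,q^2)$, and spelling out both the maximality of the cliques $\mathcal{C}_P$ and the distinctness bookkeeping in $(\mathrm{iv})$, which the paper leaves implicit --- are all sound and make the argument somewhat more self-contained.
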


{\bf Proof.}
	Combining the facts that there are $q^2+1$ lines through a point in $\mathrm{PG}(2,q^2)$ and lines intersect in $1$ or $q+1$ points, one can verify that there are $q^2$ secants and a unique tangent through any point in $\cal H$. In particular, there are $q^3+1$ tangents.

\bigskip

Proof of (i): As there are $q^4+q^2+1$ lines in $\mathrm{PG}(2,q^2)$, it follows from the remarks above that the number of secants is $n = q^4+q^2+1-(q^3+1)$. Given a secant $\ell$, there are $q^2-1$ more secants through every point of $\cal H$ on $\ell$, which shows $d = (q+1)(q^2-1)$.

\bigskip

Proof of (ii): Observe that the $q^2$ secants through a fixed point of $\cal H$ form a maximal clique in $H_q$. We define $\cal C$ to be the set of all such cliques, one for each point of $\cal H$. Since there is exactly one secant through two distinct points of $\cal H$, it follows that any two distinct cliques in $\cal C$ intersect in exactly one vertex.

\bigskip

Proof of (iii): As every secant contains $q+1$ points of $\cal H$, it follows that every vertex is contained in $q+1$ cliques in $\cal C$.

\bigskip

Proof of (iv): This follows from Proposition \ref{thm:no-onan}: the lack of an O'Nan configuration implies that every copy of $K_4$ in $H_q$ corresponds to four secants, at least three of which are concurrent in a point of $\cal H$ -- the three concurrent secants comprise a triangle in the clique in $\cal C$ corresponding to their intersection point.
$\Box$

\bigskip

\textbf{Remark.} With a bit more work and using Proposition \ref{thm:no-onan}, one can show that the set $\cal C$ is the set of all {\em maximum} cliques whenever $q \geq 3$. For $q = 2$ one can construct different cliques of size four by taking three secants concurrent in a point of $\cal H$ and one more secant intersecting each of the three others in a point of $\cal H$. Since we don't need the fact that the cliques in $\cal C$ are maximum, we will not refer to them as such in the remainder. \\

\textbf{Remark.} The graph $H_q$ also has many other interesting properties. For instance, it is a strongly regular graph in which adjacent vertices have $2q^2 - 2$ common neighbors and non-adjacent vertices have $(q + 1)^2$ common neighbors. The spectrum of its adjacency matrix is therefore determined by the theory of strongly regular graphs (see Section 3.1.6 in Brouwer and Van Maldeghem~\cite{BV22}). In this way, one can find that the non-trivial eigenvalues of the adjacency matrix of $H_q$ are $q^2-q-2$ and $-q-1$, with multiplicities $q^3$ and $(q^2-q-1)(q^2-q+1)$ respectively.

\bigskip

It will be convenient throughout the following sections to let $m = 2^{24}q^2$.

\subsection{Clique structure of $H_q$}\label{sec:cliquesection}

By Proposition \ref{thm:basegraph}.ii, $H_q$ is a union of maximal cliques of order $q^2$ pairwise intersecting in at most one vertex.
The goal of this section
is to prove the following lemma, which says in words that although many edges of $H_q[X]$ may lie in cliques of large size relative to $|X|$  -- as many as ${q^2 \choose 2}$ in a single clique -- in all instances of sets $X$ of size $m$, many edges  lie in cliques of linear size in $q$. The point is to show concentration of the number of edges in $X$ once each clique is replaced by a random complete bipartite graph. Fix a set $X \subseteq V(H_q)$, and let
\[\mathcal{T}_X = \{X \cap C \,\, | \,\, C \in \mathcal{C}, |X \cap C| \geq 2\},\] where $\cal C$ is the set of cliques from Proposition \ref{thm:basegraph}.ii.

\begin{lemma}\label{thm:cliques}
Let $X \subseteq V(H_q)$ with $|X| = m = 2^{24}q^2$. Then either the number of edges of $H_q[X]$ contained in cliques in $\mathcal{T}_X$ of
order at most $\sqrt{2m}/\log n$ is at least
\begin{equation}\label{small}
\frac{m^2}{64q}
\end{equation}
or the number of edges of $H_q[X]$ in cliques in $\mathcal{T}_X$ of order between $\sqrt{2m}/\log n$ and $\sqrt{2m}$ is at least
\begin{equation}\label{medium}
\frac{qm^{3/2}}{16\log^2 \! n}.
\end{equation}
\end{lemma}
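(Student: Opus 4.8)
The plan is to sort the traces in $\mathcal{T}_X$ by order, tracking two quantities at once: their total order (the ``incidence budget'' of $X$) and the number of edges they carry. Since two secants meet $\mathcal{H}$ in at most one common point, $H_q$ is the \emph{edge-disjoint} union of the cliques in $\mathcal{C}$, so every edge of $H_q[X]$ lies in exactly one $T \in \mathcal{T}_X$; thus, writing $E_{\mathrm{s}}, E_{\mathrm{m}}, E_{\mathrm{h}}$ for the number of edges in traces of order at most $\sqrt{2m}/\log n$, of order in $(\sqrt{2m}/\log n, \sqrt{2m}]$, and of order greater than $\sqrt{2m}$ respectively, we have $E_{\mathrm{s}} = \sum_{|T| \le \sqrt{2m}/\log n} \binom{|T|}{2}$ and likewise for $E_{\mathrm{m}}, E_{\mathrm{h}}$, with $E_{\mathrm{s}}+E_{\mathrm{m}}+E_{\mathrm{h}} = e(H_q[X])$, and we must show $E_{\mathrm{s}} \ge m^2/(64q)$ or $E_{\mathrm{m}} \ge qm^{3/2}/(16\log^2 n)$. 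For the budget, $\sum_{T \in \mathcal{T}_X} |T| = \sum_{C \in \mathcal{C}} |X \cap C| - \#\{C : |X\cap C| = 1\} = (q+1)m - \#\{C : |X\cap C| = 1\}$, and since a secant meets $\mathcal{H}$ in only $q+1$ points at most $q^3+1$ cliques of $\mathcal{C}$ meet $X$ at all; as $m = 2^{24}q^2$ dwarfs $q^3$ this gives $\sum_{T\in\mathcal{T}_X} |T| = (1-o(1))(q+1)m$. Let $S_{\mathrm{s}}, S_{\mathrm{m}}, S_{\mathrm{h}}$ denote the total orders of the traces in the three classes, so that $S_{\mathrm{s}} + S_{\mathrm{m}} + S_{\mathrm{h}} = (1-o(1))(q+1)m$.

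The heart of the matter is to show that the huge traces barely touch the budget: $S_{\mathrm{h}} = O(m)$. Put $P = \{p \in \mathcal{H} : |X \cap C_p| > \sqrt{2m}\}$, where $C_p$ is the clique of $\mathcal{C}$ belonging to the point $p$, so that $S_{\mathrm{h}} = \sum_{p \in P} |X\cap C_p| = \sum_{v \in X} |v \cap P|$, with $v\cap P$ the set of points of $P$ lying on the secant $v$. Because the secants of $\mathcal{H}$ form a Steiner system — every two points of $\mathcal{H}$ lie on exactly one secant — at most one secant of $X$ passes through any given pair of points of $P$, so $\sum_{v\in X}\binom{|v\cap P|}{2} \le \binom{|P|}{2}$. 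Hence $\sum_{v\in X}|v\cap P|^2 \le S_{\mathrm{h}} + |P|^2$, and Cauchy--Schwarz over $v \in X$ yields $S_{\mathrm{h}}^2 \le m(S_{\mathrm{h}} + |P|^2)$, so $S_{\mathrm{h}} \le m + |P|\sqrt{m}$. On the other hand each $p\in P$ contributes more than $\sqrt{2m}$ to $S_{\mathrm{h}}$, so $S_{\mathrm{h}} > |P|\sqrt{2m}$; comparing the two bounds forces $|P|(\sqrt{2m}-\sqrt m) < m$, i.e.\ $|P| < (\sqrt 2 + 1)\sqrt m$, and therefore $S_{\mathrm{h}} < (2+\sqrt 2)m$. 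I expect this to be the main obstacle: the trivial bound $S_{\mathrm{h}} \le (q+1)m$ — all of the incidence budget could a priori sit in huge cliques — is off by a whole factor of $q$ and would wreck everything, and squeezing out $S_{\mathrm{h}} = O(m)$ really uses the $4$-cycle-free incidence geometry of the unital, not just counting incidences.

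Given $S_{\mathrm{h}} < (2+\sqrt 2)m = o((q+1)m)$, we get $S_{\mathrm{s}} + S_{\mathrm{m}} = (1-o(1))(q+1)m \ge \tfrac{9}{10}qm$, and the dichotomy follows by balancing. If $S_{\mathrm{m}} \ge qm/(5\log n)$, then using $\binom{|T|}{2} \ge \tfrac14\cdot\tfrac{\sqrt{2m}}{\log n}\,|T|$ for every medium trace (valid since $|T| \ge \sqrt{2m}/\log n \ge 2$) we obtain $E_{\mathrm{m}} \ge \tfrac{\sqrt{2m}}{4\log n}\,S_{\mathrm{m}} \ge \tfrac{\sqrt 2\,qm^{3/2}}{20\log^2 n} > \tfrac{qm^{3/2}}{16\log^2 n}$, the second alternative. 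Otherwise $S_{\mathrm{s}} \ge \tfrac{9}{10}qm - \tfrac{qm}{5\log n} \ge \tfrac45 qm$; since $\mathcal{T}_X$ has at most $q^3+1$ members, Cauchy--Schwarz gives $\sum_{|T|\le\sqrt{2m}/\log n}|T|^2 \ge S_{\mathrm{s}}^2/(q^3+1) \ge m^2/(4q)$, whence $E_{\mathrm{s}} = \tfrac12\bigl(\sum|T|^2 - S_{\mathrm{s}}\bigr) \ge \tfrac12\bigl(\tfrac{m^2}{4q} - (q+1)m\bigr) \ge \tfrac{m^2}{10q} > \tfrac{m^2}{64q}$, the first alternative. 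In either case one of the two stated bounds holds, and the lemma follows. (Constants have been handled loosely throughout; the slack is ample in the first alternative and adequate in the second, using that $q^3/m$ is tiny and $\log n \ge 2$.)
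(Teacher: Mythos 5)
Your proof follows essentially the same route as the paper's. The crux — that the total trace size over large cliques is $O(m)$ — is the paper's Lemma~4, and you prove it by the identical double count (pairs of large cliques share at most one vertex of $X$, which in your dual phrasing is: a pair of ``large'' points of $\cal H$ lies on a unique secant), followed by Cauchy--Schwarz where the paper applies Jensen; the paper's version is marginally tighter, giving $v(\mathcal{L}) \le 2m$ rather than $(2+\sqrt 2)m$. The only other difference is cosmetic: the paper splits the remaining budget $v(\mathcal{S}\sqcup\mathcal{M})$ evenly between small and medium, then applies Jensen in the small case and the medium-size floor in the medium case, whereas you use an asymmetric threshold ($qm/(5\log n)$ to medium, the rest to small). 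Both work. One caveat worth noting: your intermediate claim $S_{\mathrm s}+S_{\mathrm m}\ge\tfrac{9}{10}qm$ fails for $q\in\{2,3\}$ (where $(q+1)-(2+\sqrt 2)\le 0$), so as written your argument doesn't literally cover the smallest prime powers the way the paper's constants do; since the lemma is only invoked for $q\ge 2^{40}$ this is harmless, but it is a genuine difference from the paper's ``for all $q\ge 2$'' bookkeeping, and is easily fixed by using the sharper $S_{\mathrm h}\le 2m$.
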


It is convenient to introduce some further notation and terminology to prove this lemma.  We consider a partition of $\mathcal{T} = \mathcal{T}_X$ into three sets $\mathcal{S} \sqcup \mathcal{M} \sqcup \mathcal{L}$ of {\em small}, {\em medium} and {\em large} cliques, respectively, where if $|X| = k$,
\begin{eqnarray*} \
\mathcal{S} &=& \{T \in \mathcal{T} : 2 \leq |V(T)| \leq \sqrt{2k}/\log n\},  \\
\mathcal{M} &=& \{T \in \mathcal{T} : \sqrt{2k}/\log n < |V(T)| \leq \sqrt{2k}\},  \\
\mathcal{L} &=& \{T \in \mathcal{T} : \sqrt{2k} < |V(T)| \leq q^2\}.
\end{eqnarray*}
For a set $\mathcal{U} \subseteq \mathcal{T}$, it is convenient to define
\begin{eqnarray*}
v(\mathcal{U}) = \sum_{T \in \mathcal{U}} |V(T)| \quad \mbox{ and } \quad
e(\mathcal{U}) = \sum_{T \in \mathcal{S}} \textstyle{{|V(T)| \choose 2}}.
\end{eqnarray*}
In words, $e(\mathcal{U})$ is the number of edges in cliques in $\mathcal{U}$, and Lemma \ref{thm:cliques} claims if $|X| = m$, then $e(\mathcal{S}) \geq m^2/64q$ or $e(\mathcal{M}) \geq qm^{3/2}/16\log^2 \! n$. We use the following key lemma:

\begin{lemma}\label{lem:edgecount}
For any set $X \subseteq V(H_q)$,
\begin{eqnarray}
v(\mathcal{L}) &\leq& 2|X|, \label{c4bound} \\
v(\mathcal{S} \sqcup \mathcal{M}) &\geq& (q - 1)|X| - q^3 - 1. \label{c4bound2}
\end{eqnarray}
\end{lemma}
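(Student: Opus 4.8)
The plan is to set up a double-counting of incidences between the set $X$ and the cliques in $\mathcal{C}$, using the clique structure recorded in Proposition \ref{thm:basegraph}. Recall that each vertex of $H_q$ lies in exactly $q+1$ cliques of $\mathcal{C}$ (part (iii)), that $|\mathcal{C}| = q^3+1$ (part (ii)), and that each clique of $\mathcal{C}$ has order $q^2$. Counting pairs $(x,C)$ with $x \in X$, $C \in \mathcal{C}$, $x \in C$ gives exactly $(q+1)|X|$. On the other hand, writing $c_C = |X \cap C|$ for each $C$, this count equals $\sum_{C \in \mathcal{C}} c_C$. The cliques contributing to $\mathcal{T}_X = \mathcal{S} \sqcup \mathcal{M} \sqcup \mathcal{L}$ are precisely those with $c_C \geq 2$, so
\[
(q+1)|X| \;=\; \sum_{C: c_C \geq 2} c_C \;+\; \sum_{C: c_C \leq 1} c_C \;\leq\; v(\mathcal{T}_X) + |\{C : c_C = 1\}| \;\leq\; v(\mathcal{S} \sqcup \mathcal{M}) + v(\mathcal{L}) + (q^3+1),
\]
since there are at most $|\mathcal{C}| = q^3+1$ cliques with $c_C = 1$. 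This already gives an inequality close to what is needed, once $v(\mathcal{L})$ is controlled.

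For the bound $v(\mathcal{L}) \leq 2|X|$, the idea is that cliques in $\mathcal{L}$ are \emph{large} — each $T \in \mathcal{L}$ has $|V(T)| > \sqrt{2|X|}$ — so there cannot be too many of them, and together they cannot cover too much of $X$. Here I would use the fact that any two distinct cliques of $\mathcal{C}$ share at most one vertex (Proposition \ref{thm:basegraph}.ii), so the sets $X \cap C$ for $C$ ranging over $\mathcal{L}$ are ``almost disjoint'': a standard inclusion–exclusion / Bonferroni-type estimate gives
\[
|X| \;\geq\; \Bigl|\bigcup_{T \in \mathcal{L}} V(T)\Bigr| \;\geq\; v(\mathcal{L}) - \binom{|\mathcal{L}|}{2},
\]
because every vertex is overcounted at most once for each pair of large cliques containing it, and distinct large cliques overlap in at most one vertex. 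Since each large clique uses more than $\sqrt{2|X|}$ vertices of $X$ and these overlap pairwise in at most one vertex, one gets $v(\mathcal{L}) \le |X| + |\mathcal{L}|$ and, crucially, $|\mathcal{L}| \cdot \bigl(\sqrt{2|X|} - |\mathcal{L}|\bigr) \le |X|$ or a similar inequality forcing $|\mathcal{L}| \le \sqrt{|X|/2} \cdot (1+o(1))$, hence $\binom{|\mathcal{L}|}{2} \le |X|/2 \cdot (1+o(1))$ and $v(\mathcal{L}) \le 2|X|$. (One should be a little careful with the constants; the threshold $\sqrt{2k}$ in the definition of $\mathcal{L}$ is presumably chosen exactly so this works out cleanly, and I would track the arithmetic to confirm the factor $2$.) Combining this with the displayed incidence count, $v(\mathcal{S} \sqcup \mathcal{M}) \geq (q+1)|X| - v(\mathcal{L}) - (q^3+1) \geq (q+1)|X| - 2|X| - (q^3+1) = (q-1)|X| - q^3 - 1$, which is \eqref{c4bound2}.

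The main obstacle I anticipate is getting the constant in \eqref{c4bound} exactly right: the naive Bonferroni bound $v(\mathcal{L}) \le |X| + \binom{|\mathcal{L}|}{2}$ is only useful once $|\mathcal{L}|$ is pinned down, and bounding $|\mathcal{L}|$ requires using the size threshold $|V(T)| > \sqrt{2|X|}$ together with the pairwise-intersection-at-most-one property in a slightly self-referential way (each large clique contributes $> \sqrt{2|X|}$ vertices, but sharing at most $|\mathcal{L}| - 1$ of them with other large cliques, so contributes $> \sqrt{2|X|} - |\mathcal{L}|$ \emph{new} vertices to the union, which is at most $|X|$). Solving the resulting quadratic inequality in $|\mathcal{L}|$ and then feeding back into the Bonferroni estimate is where all the care goes; everything else is bookkeeping with the parameters from Proposition \ref{thm:basegraph}.
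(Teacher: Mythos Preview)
Your derivation of \eqref{c4bound2} from \eqref{c4bound} by double-counting incidences $(x,C)$ with $x\in X\cap C$ is correct and is exactly what the paper does.

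The argument for \eqref{c4bound}, however, has a real gap. Write $k=|X|$ and $L=|\mathcal L|$. Your Bonferroni step gives
\[
v(\mathcal L)\le k+\binom{L}{2},
\]
and the size threshold gives $v(\mathcal L)>\sqrt{2k}\,L$. But these two inequalities together do \emph{not} force $v(\mathcal L)\le 2k$: eliminating $v(\mathcal L)$ yields $\sqrt{2k}\,L < k + L^2/2$, i.e.\ $(L-\sqrt{2k})^2>0$, which holds for every $L\neq\sqrt{2k}$ and gives no upper bound on $L$. Likewise your ``crucial'' inequality $L(\sqrt{2k}-L)\le k$ rearranges to $L^2-\sqrt{2k}\,L+k\ge 0$, whose discriminant $2k-4k$ is negative, so it is vacuously true for all $L$. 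The self-referential scheme you outline (each large clique contributes $>\sqrt{2k}-L$ new vertices) is just another way of writing the same Bonferroni bound and is equally inconclusive. In short, the almost-disjointness of the large cliques is being used in a form that is too weak.

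The missing idea is to apply convexity rather than a union bound. Let $d(x)=|\{T\in\mathcal L:x\in V(T)\}|$. Since distinct cliques of $\mathcal C$ share at most one vertex, $\sum_{x\in X}\binom{d(x)}{2}\le\binom{L}{2}$, and by Jensen the left side is at least $k\binom{\bar d}{2}$ with $\bar d=v(\mathcal L)/k$. This yields
\[
L^2\;\ge\; v(\mathcal L)\Bigl(\tfrac{v(\mathcal L)}{k}-1\Bigr).
\]
Now if $v(\mathcal L)>2k$ then $v(\mathcal L)/k-1>v(\mathcal L)/(2k)$, so $L^2>v(\mathcal L)^2/(2k)$; combined with $v(\mathcal L)>\sqrt{2k}\,L$ this gives $L^2>L^2$, a contradiction. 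The point is that Jensen turns the pairwise-intersection bound into a \emph{quadratic} lower bound on $L^2$ in terms of $v(\mathcal L)$, whereas Bonferroni only gives a linear one.
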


\begin{proof}
By Proposition \ref{thm:basegraph}.iii, if $|X| = k$ then $v(\mathcal{T}) \geq (q + 1)k - q^3 - 1$ since the number of cliques $T \in \mathcal{T}$
such that $|V(T) \cap X| = 1$ is at most $q^3 + 1$. So (\ref{c4bound}) implies (\ref{c4bound2})
using $v(\mathcal{S} \sqcup \mathcal{M}) = v(\mathcal{T})  - v(\mathcal{L})$.
Let $d(x) = |\{T \in \mathcal{L} : x \in V(T)\}|$ be the degree of $x$. As cliques in $\mathcal{T}$ pairwise share at most one vertex by Proposition \ref{thm:basegraph}.ii,
\[ \sum_{x \in X} {d(x) \choose 2} \leq {|\mathcal{L}| \choose 2}.\]
Applying Jensen's inequality, noting that the average degree is $\overline{d} = v(\mathcal{L})/k$, we obtain
\[ |\mathcal{L}|^2 \geq v(\mathcal{L})(\tfrac{v(\mathcal{L})}{k} - 1).\]
Suppose, for a contradiction, $v(\mathcal{L}) > 2k$. Then $v(\mathcal{L})/k - 1 > v(\mathcal{L})/2k$. Since each element of $\mathcal{L}$ is a large clique, $v(\mathcal{L}) > \sqrt{2k}|\mathcal{L}|$. Therefore
\[ |\mathcal{L}|^2 > \frac{v(\mathcal{L})^2}{2k} > \frac{(\sqrt{2k}|\mathcal{L}|)^2}{2k} = |\mathcal{L}|^2,\]
a contradiction. We conclude $v(\mathcal{L}) \leq 2k$.
\end{proof}

\bigskip

{\bf Proof of Lemma \ref{thm:cliques}.} By (\ref{c4bound2}), either $v(\mathcal{S}) \geq \frac{1}{2}[(q - 1)m - q^3 - 1]$ or
$v(\mathcal{M}) \geq \frac{1}{2}[(q - 1)m - q^3 - 1]$. We consider each of these cases separately, in order to prove
(\ref{small}) and (\ref{medium}) respectively.

\bigskip

{\bf Case 1.} $v(\mathcal{S}) \geq \frac{1}{2}[(q - 1)m - q^3 - 1]$. By Proposition \ref{thm:basegraph}.ii, $|\mathcal{S}| \leq q^3 + 1$. We apply Jensen's inequality to obtain
\begin{eqnarray*}
	e(\mathcal{S}) &=& \sum_{T \in \mathcal{S}} {|V(T)| \choose 2} \\
	&\geq& |\mathcal{S}| \cdot {v(\mathcal{S})/|\mathcal{S}| \choose 2} \\
	&\geq& \frac{v(\mathcal{S})\left(v(\mathcal{S})-|\mathcal{S}|\right)}{2} \\
	&\geq&  \frac{((q - 1)m - q^3 - 1)((q - 1)m - q^3 - 1-(q^3+1))}{8}
\end{eqnarray*}
using the lower bound on $v(\mathcal{S})$ and the upper bound on $|\mathcal{S}|$. A calculation using $m = 2^{24}q^2$ shows that this is at least $m^2/64q$ for all $q \geq 2$, proving (\ref{small}).

\bigskip

{\bf Case 2.}  $v(\mathcal{M}) \geq \frac{1}{2}[(q - 1)m - q^3 - 1]$. Since $|V(T)| \leq \sqrt{2m}$ for every clique $T \in \mathcal{M}$, and as $q \geq 2$ and $m \geq 2^{24}q^2$,
\begin{eqnarray*}
|\mathcal{M}| \cdot \sqrt{2m} \; \; \geq \; \; v(\mathcal{M}) &\geq& \frac{1}{2}[(q - 1)m - (q^3 + 1)] \\
&\geq& \frac{1}{\sqrt{8}}(q - 1)m.
\end{eqnarray*}
In particular, as $q \geq 2$, $|\mathcal{M}| \geq q\sqrt{m}/8$. As $|V(T)| \geq \sqrt{2m}/\log n$
for all $T \in \mathcal{M}$,
\[ e(\mathcal{M}) = \sum_{T \in \mathcal{M}} {|V(T)| \choose 2} \geq \displaystyle{\frac{q\sqrt{m}}{8} \cdot {\sqrt{2m}/\log n \choose 2}}.\]
A calculation using $m = 2^{24}q^2$ gives (\ref{medium}). This completes the proof of Lemma \ref{thm:cliques}. $\Box$

\section{The random $K_4$-free graph $H_q^*$}\label{sec:hqstar}

According to Proposition \ref{thm:basegraph}.ii, the graph $H_q$ is a union of maximal cliques of size $q^2$ pairwise intersecting in at most one vertex. For each maximal clique $T$ in $H_q$, let $(A_T,B_T)$ be a random partition
of $V(T)$ defined by independently placing vertices in $A_T$ or $B_T$ with probability $1/2$ each. Let $H_q^*$ be the random graph consisting of the union over all maximal cliques $T$ in $H_q$ of the complete bipartite subgraph with parts $A_T$ and $B_T$. According to Proposition \ref{thm:basegraph}.iv, this graph $H_q^*$ does not contain a complete graph of order four, since each complete graph of order four contains at least three vertices from some maximal clique $T$,
whereas $H_q^*[V(T)]$ is bipartite and therefore triangle-free for each maximal clique $T$. In this sense, $H_q^*$ is a random $K_4$-free graph. We plan to prove the following theorem:

\begin{theorem}\label{thm:pseudo}
For each prime power $q \geq 2^{40}$, there exists a $K_4$-free graph $G_q^*$ with $q^2(q^2 - q + 1)$ vertices such that
for every set $X$ of at least $m = 2^{24}q^2$ vertices of $G_q^*$,
\[ e(G_q^*[X]) \geq \frac{|X|^2}{256q}.\]
\end{theorem}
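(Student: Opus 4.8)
The plan is to show that the random graph $H_q^*$ satisfies the edge-count lower bound with positive probability, so that some instance $G_q^*$ exists; the $K_4$-freeness and vertex count are already built into the construction. It suffices to prove that for a fixed set $X$ of exactly $m = 2^{24}q^2$ vertices, the probability that $e(H_q^*[X]) < |X|^2/(256q)$ is smaller than $\binom{n}{m}^{-1} \le 2^{-n}$, and then take a union bound over all such $X$; the extension from $|X| = m$ to all $|X| \ge m$ is immediate since a set of size larger than $m$ contains a subset of size exactly $m$ (actually one should partition a larger $X$ into blocks of size roughly $m$ and sum, giving the $|X|^2$ scaling, but this is a routine bookkeeping step). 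First I would invoke Lemma \ref{thm:cliques}: for the fixed $X$, either the small cliques in $\mathcal{T}_X$ carry at least $m^2/(64q)$ edges of $H_q$, or the medium cliques carry at least $qm^{3/2}/(16\log^2 n)$ edges of $H_q$. In either case, the corresponding family of cliques contributes, in expectation, a positive fraction of this many edges to $H_q^*$: when a clique $T$ of order $w$ is replaced by the random bipartite graph between $A_T$ and $B_T$, each edge of $T$ survives with probability exactly $1/2$, so $\mathbb{E}[e(H_q^*[X])] \ge \tfrac12 \cdot (\text{the larger of the two bounds})$, and a short computation with $m = 2^{24}q^2$ shows this comfortably exceeds $|X|^2/(128q)$, say.

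The heart of the argument is concentration: I would expose the random bits $(A_T, B_T)$ one maximal clique at a time and run the Hoeffding--Azuma inequality on the resulting Doob martingale for the random variable $Z = e(H_q^*[X])$. The key point is to bound the martingale differences. Revealing the partition of a single clique $T$ changes $Z$ by at most the number of edges of $H_q[X]$ lying inside $T$, which is $\binom{|V(T) \cap X|}{2} \le \binom{w}{2}$ where $w = |V(T)\cap X|$. Thus the sum of squared differences is at most $\sum_{T} \binom{w_T}{2}^2$. This is exactly where Lemma \ref{thm:cliques}'s dichotomy is essential: large cliques (order $> \sqrt{2m}$) have too-large squared differences to control directly, but Lemma \ref{lem:edgecount} tells us $v(\mathcal{L}) \le 2|X|$, so the large cliques carry only $O(m \cdot q^2) = O(m^2)$ edges total and we can afford to discard them (they cannot by themselves prevent the martingale from concentrating around a mean that is already guaranteed large by the small-or-medium cliques). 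More carefully, in Case 1 (small cliques dominate) each relevant clique has $w \le \sqrt{2m}/\log n$, so $\binom{w}{2}^2 \le m^2/\log^4 n$, and the number of such cliques is at most $q^3+1$, giving $\sum \binom{w}{2}^2 \le (q^3+1)m^2/\log^4 n$; in Case 2 (medium cliques dominate) each has $w \le \sqrt{2m}$, so $\binom{w}{2}^2 \le m^2$, and there are at most $q^3+1$ of them, giving $\sum \binom{w}{2}^2 \le (q^3+1)m^2$ — and one checks that even after including the large-clique contributions the total variance proxy is at most, say, $q^3 m^2 \cdot \mathrm{polylog}$. Hoeffding--Azuma then gives $\Pr[|Z - \mathbb{E}Z| > \lambda] \le 2\exp(-\lambda^2 / (2\sum \binom{w_T}{2}^2))$.

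Taking $\lambda$ to be a constant fraction of $\mathbb{E}Z$ — in Case 1, $\lambda \asymp m^2/q$; in Case 2, $\lambda \asymp qm^{3/2}/\log^2 n$ — the exponent becomes, in Case 1, of order $(m^2/q)^2 / (q^3 m^2/\log^4 n) = m^2 \log^4 n / q^5 \asymp q^4 \cdot 2^{48} \log^4 n / q^5 = 2^{48}\log^4 n / q$, which is \emph{not} obviously bigger than $n \asymp q^4$; so in fact one wants $\lambda$ just large enough that $\mathbb{E}Z - \lambda \ge |X|^2/(256q)$ while keeping $\lambda^2$ comparable to $(\mathbb{E}Z)^2$, and then the exponent is of order $m^4/(q^2 \cdot q^3 m^2) = m^2/q^5 \asymp q^{-1}2^{48}$ — this is where I expect the main difficulty, and where the precise constants ($2^{24}$ in $m$, the threshold $q \ge 2^{40}$) must be chosen: one needs the Azuma exponent to beat $n \log 2 \le q^4$, which forces the martingale-difference sum to be $o(\lambda^2/q^4)$. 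The resolution is that the relevant cliques in Case 1 actually number far fewer than $q^3+1$ on average — or rather, one splits off the cliques of order $\le \sqrt{2m}/\log n$ further, or uses that $\sum \binom{w_T}{2} = e(\mathcal{S})$ is itself bounded, to replace the crude $\sum \binom{w_T}{2}^2 \le \max_T\binom{w_T}{2} \cdot \sum_T \binom{w_T}{2}$ by something with an extra $1/\log^2 n$ or $1/q$ factor. Carrying out this optimization, with $q \ge 2^{40}$, makes the failure probability for a fixed $X$ at most $\exp(-q^4) \le \binom{n}{m}^{-1}$, and the union bound completes the proof. The final step — deducing the bound for all $|X|\ge m$ from the case $|X|=m$ — is a routine averaging over a partition of $X$ into parts of size in $[m, 2m)$, losing only a constant factor and hence landing at $|X|^2/(256q)$.
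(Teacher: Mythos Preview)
Your overall framework matches the paper's---invoke Lemma~\ref{thm:cliques}, apply Hoeffding--Azuma separately in the small-clique and medium-clique cases, then union-bound over $X$ of size $m$ and extend to larger $X$---but two steps have genuine gaps.

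\textbf{The martingale is too coarse.} You expose the randomness one \emph{clique} at a time, giving Lipschitz increments $c_T \le \binom{w_T}{2}$ where $w_T = |V(T)\cap X|$. Even with your refinement $\sum_T \binom{w_T}{2}^2 \le \bigl(\max_T \binom{w_T}{2}\bigr)\cdot e(\mathcal{S})$, in Case~1 this yields an Azuma exponent of order $e(\mathcal{S})\log^2 n/m \approx q\log^2 n$, while the union bound requires the exponent to exceed $m\log n \approx q^2\log n$. You are short by a factor of $q/\log n$, and neither of your suggested fixes (fewer cliques, or bounding $\sum \binom{w_T}{2}$) recovers it. The paper's resolution is to expose the randomness one \emph{vertex-in-clique} at a time: for each $T\in\mathcal{S}$ and $v\in V(T)$, the bit $Z_{v,T}$ (whether $v$ lands in $A_T$ or $B_T$) affects $Z_{\mathcal{S}}$ by at most $w_T-1$. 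Then
\[
\sum_i c_i^2 \;\le\; \sum_{T\in\mathcal{S}} w_T(w_T-1)^2 \;\le\; 2\Bigl(\max_{T\in\mathcal{S}} w_T\Bigr)\cdot e(\mathcal{S}) \;\le\; \frac{2\sqrt{2m}}{\log n}\cdot e(\mathcal{S}),
\]
which is smaller than your bound by a factor of order $\sqrt{m}/\log n \approx q/\log n$---exactly the missing factor. With $\lambda = e(\mathcal{S})/4$ this gives exponent $\ge m^{3/2}\log n/(4096q) = m\log n$, and the union bound goes through with the stated constants. Case~2 is analogous with $c = \sqrt{2m}$.

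\textbf{The extension to $|X|>m$ does not work by partitioning.} Splitting $X$ into $|X|/m$ blocks of size $m$ and summing gives only $e(G_q^*[X]) \ge (|X|/m)\cdot 2^{40}q^3 = |X|\cdot m/(256q)$, not $|X|^2/(256q)$. The paper instead double-counts pairs $(e,Y)$ with $Y\subseteq X$, $|Y|=m$, and $e\in E(G_q^*[Y])$: each of the $\binom{|X|}{m}$ choices of $Y$ contributes at least $2^{40}q^3$ edges, and each edge of $G_q^*[X]$ is counted $\binom{|X|-2}{m-2}$ times, yielding $e(G_q^*[X]) \ge 2^{40}q^3 \cdot |X|^2/m^2 = |X|^2/(256q)$.
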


This theorem is essentially best possible, in the sense that the average set $X$ of vertices of $H_q$
of size at least about $q^2$ induces at most about $|X|^2/q$ edges.
The graph $G_q^*$ in Theorem \ref{thm:pseudo} will simply be an instance of the random graph $H_q^*$. We prove Theorem \ref{thm:pseudo} in Section \ref{proofofpseudo}, using the Hoeffding-Azuma martingale inequality~\cite{HA} in Section \ref{sec:prob}. It is possible to invoke
other concentration inequalities, such as McDiarmid's bounded differences inequality~\cite{Mc} or the Hanson-Wright~\cite{HW} inequality as in Conlon~\cite{C}, but
we opted to describe explicitly the fairly simple martingale which leads to the proof of Theorem \ref{thm:pseudo}.

\bigskip

{\bf Remark.} The approach used to prove Theorem \ref{thm:pseudo} can be used to improve the results of Conlon~\cite{C} on
triangle-free graphs: it was shown that if $J_q$ is the graph whose vertices are the points of a generalized quadrangle of order $q$ and
whose edges are the pairs of collinear points, then $J_q$ is a union of $q^3 + q^2 + q + 1$ edge-disjoint cliques of order $q + 1$.
Let $J_q^*$ be defined by taking random complete bipartite graphs in these cliques of order $q + 1$. Then for any set $X$ of vertices of $J_q^*$, writing the expected density of $J_q^*$ as $p = (1 + o(1))/2q$, Conlon~\cite{C} showed
\[ e(X) \geq p{|X| \choose 2} - O(q\log q) |X|.\]
This is effective for $|X| = \Omega(q^2\log q)$. Using the approach in this paper, we can show
\[ e(X) \geq p{|X| \choose 2} - O(q)|X|\]
which is effective for $|X| = \Omega(q^2)$, and is in this sense best possible, since $J_q$ has independent
sets of size $q^2 + 1$. The separation of small from medium cliques is key in eliminating the logarithmic factor in Conlon's bound, and
this solves a question raised in the concluding remarks of Conlon~\cite{C}. For $H_q^*$, the approach in this paper
may be used to give a similar lower bound on $e(X)$. However, for our purposes, it is more convenient to
use in $H_q^*$ a lower bound of the form $e(X) = \Omega(|X|^2/q)$ in order to prove Theorem \ref{thm:main}.

\subsection{Pseudorandomness in $H_q^*$}\label{sec:prob}

The main result of this section, which essentially says that sets of large quadratic size in $H_q^*$
induce many edges with high probability, is proved using the Hoeffding-Azuma Inequality~\cite{HA}, which may be stated in the following form:

\begin{proposition} {\bf (Hoeffding-Azuma Inequality)} \label{prop:ha}
Let $\lambda \geq 0$ and $c_1,c_2,\dots,c_k > 0$ be reals, and $Z = (Z_0,Z_1,Z_2,\dots,Z_k)$ be a martingale with $Z_0 = \mathbb E(Z)$ and $|Z_i - Z_{i - 1}| \leq c_i$ for all $i \leq k$. Then
\[ \mathbb P(Z - Z_0 \leq -\lambda) \leq \exp\Bigl(-\frac{2\lambda^2}{\sum_{i = 1}^k c_i^2}\Bigr).\]
\end{proposition}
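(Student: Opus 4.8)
The plan is to prove Proposition \ref{prop:ha} by the standard exponential moment (Chernoff) method applied to the martingale differences $D_i = Z_i - Z_{i - 1}$, $1 \le i \le k$. Write $\mathcal{F}_i$ for the $\sigma$-algebra with respect to which $Z_0,\dots,Z_i$ are measurable; the martingale hypothesis gives $\mathbb{E}[D_i \mid \mathcal{F}_{i - 1}] = 0$, while $|D_i| \le c_i$ pointwise, and $Z - Z_0 = \sum_{i = 1}^k D_i$. For a parameter $s > 0$ to be chosen later, Markov's inequality applied to the nonnegative random variable $e^{-s(Z - Z_0)}$ gives
\[ \mathbb{P}(Z - Z_0 \le -\lambda) = \mathbb{P}\bigl(e^{-s(Z - Z_0)} \ge e^{s\lambda}\bigr) \le e^{-s\lambda}\,\mathbb{E}\bigl[e^{-s(Z - Z_0)}\bigr], \]
so everything reduces to bounding the moment generating function $\mathbb{E}[\exp(-s\sum_{i} D_i)]$.

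To bound it I would expose the differences one at a time. Conditioning on $\mathcal{F}_{k - 1}$ and using that $\sum_{i < k} D_i$ is $\mathcal{F}_{k - 1}$-measurable,
\[ \mathbb{E}\Bigl[e^{-s\sum_{i = 1}^k D_i}\Bigr] = \mathbb{E}\Bigl[e^{-s\sum_{i = 1}^{k - 1} D_i}\,\mathbb{E}\bigl[e^{-sD_k} \mid \mathcal{F}_{k - 1}\bigr]\Bigr]. \]
The inner conditional expectation is handled by the conditional version of Hoeffding's lemma: a random variable $Y$ with $\mathbb{E}[Y \mid \mathcal{F}] = 0$ whose conditional support lies in an interval of length $w$ satisfies $\mathbb{E}[e^{tY} \mid \mathcal{F}] \le e^{t^2 w^2 / 8}$ almost surely. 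This in turn follows by bounding $e^{ty}$ above by the secant line of the exponential across that interval (convexity), taking conditional expectations so that the linear term vanishes, and estimating the remaining $\cosh$-type quantity by $e^{x^2/2}$. Applying this to $D_k$, whose conditional range has width at most $c_k$ in the bounded-differences setting (and at most $2c_k$ using only $|D_k| \le c_k$), gives the deterministic bound $\mathbb{E}[e^{-sD_k} \mid \mathcal{F}_{k - 1}] \le e^{s^2 c_k^2 / 8}$.

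Iterating this peeling $k$ times — at each stage replacing the freshly exposed conditional moment generating function by its deterministic bound $e^{s^2 c_i^2 / 8}$ — yields $\mathbb{E}[e^{-s(Z - Z_0)}] \le \exp\bigl(\tfrac{s^2}{8}\sum_{i = 1}^k c_i^2\bigr)$, and hence
\[ \mathbb{P}(Z - Z_0 \le -\lambda) \le \exp\Bigl(-s\lambda + \tfrac{s^2}{8}\textstyle\sum_{i = 1}^k c_i^2\Bigr). \]
Choosing $s = 4\lambda / \sum_{i = 1}^k c_i^2$ makes the exponent equal to $-2\lambda^2 / \sum_{i = 1}^k c_i^2$, which is the claimed bound. (If one insists on using only $|D_i| \le c_i$ together with the symmetric form $\mathbb{E}[e^{tY} \mid \mathcal{F}] \le e^{t^2 c_i^2/2}$ of Hoeffding's lemma, the same optimization gives the marginally weaker $\exp(-\lambda^2/(2\sum_i c_i^2))$, which would still suffice for every application in this paper.)

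The only ingredient here that is not pure bookkeeping with conditional expectations is Hoeffding's lemma, and that is the step I would be most careful about: the bound on the conditional moment generating function must be a deterministic constant rather than a random quantity, otherwise the telescoping across the $k$ stages breaks down — this is exactly why it is phrased as an almost-sure inequality and why the relevant parameter is the (conditional) width of the range of each $D_i$. Beyond that, the proof is a one-variable optimization and presents no real obstacle.
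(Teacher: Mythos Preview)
The paper does not give a proof of Proposition~\ref{prop:ha}: it is quoted as a known result with a reference to Hoeffding, so there is nothing in the paper to compare your argument against. Your proof is the standard Chernoff--Hoeffding argument and is essentially correct.

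One point worth making explicit, which you already half-acknowledge: under the hypothesis actually stated in the proposition, namely $|Z_i - Z_{i-1}| \le c_i$, the conditional range of $D_i$ has width at most $2c_i$, not $c_i$. Hoeffding's lemma with width $2c_i$ gives $\mathbb{E}[e^{-sD_i}\mid\mathcal{F}_{i-1}] \le e^{s^2 c_i^2/2}$, and optimizing yields $\exp\bigl(-\lambda^2/(2\sum_i c_i^2)\bigr)$ rather than the $\exp\bigl(-2\lambda^2/\sum_i c_i^2\bigr)$ printed in the proposition. The sharper constant $2$ is the McDiarmid form, which needs the stronger assumption that the \emph{conditional range} of each increment has width $c_i$; your parenthetical ``bounded-differences setting'' imports exactly that extra hypothesis without saying so. You are right that either constant is more than enough for the two applications in Lemma~\ref{thm:concentration}, so this is a cosmetic issue with the statement of the proposition rather than a gap in your reasoning.
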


If $c_i \leq c$ for all $i$, then the martingale as in Proposition \ref{prop:ha} is called {\em $c$-Lipschitz}.

\bigskip

For a set $X \subseteq V(H_q^*)$, it is convenient to define the random variable $Z_X = e(H_q^*[X])$.
The main result used to prove Theorem \ref{thm:pseudo} is the following, which essentially proves Theorem \ref{thm:pseudo}
for sets $X$ of size exactly $m$. Then Theorem \ref{thm:pseudo} follows for any set $X$ with $|X| \geq m$ by sampling
$m$ vertices of $X$ -- see Section \ref{proofofpseudo}.

\begin{lemma}\label{thm:concentration}
If $q \geq 2^{40}$ and $X \subseteq V(H_q)$ and $|X| = m = 2^{24}q^2$, then
\begin{equation}\label{probbound}
\mathbb P(Z_X \leq 2^{40}q^3) < n^{-m}.
\end{equation}
\end{lemma}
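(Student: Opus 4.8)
The plan is to expose $Z_X = e(H_q^*[X])$ as a $c$-Lipschitz martingale in a carefully chosen set of independent coin flips, compute a crude lower bound on $\mathbb{E}(Z_X)$, and then apply the Hoeffding-Azuma inequality from Proposition \ref{prop:ha}. The randomness in $H_q^*$ comes from the independent $\pm$-choices made for each vertex within each maximal clique $T \in \mathcal{C}$; equivalently, for each pair $(v,T)$ with $v \in V(T)$ there is a single fair coin deciding whether $v \in A_T$ or $v \in B_T$. Restricting attention to $X$, only the coins for pairs $(v,T)$ with $v \in X \cap T$ matter, and only those cliques $T$ with $|X \cap T| \geq 2$ contribute edges -- that is, the cliques recorded in $\mathcal{T}_X$. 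I would order these relevant coins cliquewise: process the cliques $T \in \mathcal{T}_X$ in some order, and within each $T$ reveal the coins of its vertices in $X$ one at a time, letting $Z_0 = \mathbb{E}(Z_X), Z_1, \dots$ be the Doob martingale of $Z_X$ with respect to this filtration.

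\textbf{Lipschitz bound.} The key point is that flipping a single coin -- the coin of a vertex $v$ inside clique $T$ -- only changes edges of $H_q^*[X]$ that lie inside $T$ and are incident to $v$. Since $H_q^*[V(T)]$ is a complete bipartite graph on $|X \cap T|$ vertices of $X$, moving $v$ from one side to the other changes at most $|X \cap T| - 1 \le |V(T)| - 1$ edges. So the martingale increment associated to a coin of clique $T$ is bounded by $c_T := |V(T)| - 1$. Thus $\sum c_i^2 \le \sum_{T \in \mathcal{T}_X} |V(T)| \cdot (|V(T)|-1)^2 \le \sum_{T \in \mathcal{T}_X} |V(T)|^3$, but this is far too lossy because of the large cliques (a single clique of order $q^2$ already contributes $\sim q^6$). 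This is where Lemma \ref{thm:cliques} and Lemma \ref{lem:edgecount} must be brought in: I would \emph{discard the large cliques entirely} from the count, i.e. lower-bound $Z_X$ by the number of edges lying only in small and medium cliques. By Lemma \ref{thm:cliques} one of the two alternatives holds, and in either case the relevant cliques have order at most $\sqrt{2m}$, so $c_T \le \sqrt{2m}$ for every coin that I track, giving $\sum c_i^2 \le 2m \cdot v(\mathcal{S} \sqcup \mathcal{M}) \le 2m \cdot |X| \cdot q^2 = O(m^2 q^2)$ after using $v(\mathcal{S}\sqcup\mathcal{M}) \le v(\mathcal{T}) \le (q+1)|X|$.

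\textbf{Expectation and the final estimate.} For the expectation: conditioning on the cliques $\mathcal{T}_X$ (which are determined by $X$, not by the coins), each edge of $H_q$ inside a clique $T$ of $\mathcal{T}_X$ survives into $H_q^*[X]$ with probability exactly $1/2$. So $\mathbb{E}(Z_X) = \frac12 e(\mathcal{T}_X) \ge \frac12 e(\mathcal{S} \sqcup \mathcal{M})$, and by the two cases of Lemma \ref{thm:cliques} this is at least $\min\{m^2/128q,\; qm^{3/2}/32\log^2 n\}$. Plugging $m = 2^{24}q^2$ and $n = \Theta(q^4)$, so $\log n = \Theta(\log q)$, one checks both quantities are comfortably larger than $2 \cdot 2^{40} q^3$ for $q \ge 2^{40}$; hence setting $\lambda = \mathbb{E}(Z_X) - 2^{40}q^3 \ge \frac12 \mathbb{E}(Z_X) \ge 2^{39} q^3$ (roughly), Hoeffding-Azuma gives $\mathbb{P}(Z_X \le 2^{40}q^3) \le \mathbb{P}(Z_X - Z_0 \le -\lambda) \le \exp(-2\lambda^2 / \sum c_i^2) \le \exp(-\Omega(q^6 / (m^2 q^2))) = \exp(-\Omega(q^2/\log^4 q))$ in the medium case (and even better, $\exp(-\Omega(q^2))$, in the small case). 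Since $n^{-m} = \exp(-m \log n) = \exp(-\Theta(q^2 \log q))$, I would need $\Omega(q^2/\log^4 q) > \Theta(q^2 \log q)$, which is \emph{false} -- so the crude Lipschitz bound $c_T \le \sqrt{2m}$ is not quite enough in the medium-clique case, and the genuinely delicate part of the argument is to do better: one must handle the two cases of Lemma \ref{thm:cliques} with different martingale groupings, in the medium case exploiting that the number of \emph{coins} in medium cliques is only $v(\mathcal{M}) = O(qm)$ while $e(\mathcal{M})$ is large, and possibly grouping all coins of one medium clique into a single exposure step with increment bounded by $\binom{|V(T)|}{2} \le m$ rather than $|V(T)|$ per coin -- which trades a larger per-step bound for far fewer steps ($|\mathcal{M}| = O(q\sqrt m)$ of them), yielding $\sum c_i^2 \le |\mathcal{M}| \cdot m^2 = O(q m^{5/2})$ and then $\exp(-\Omega(\lambda^2/(q m^{5/2})))$; with $\lambda = \Omega(qm^{3/2}/\log^2 n)$ this is $\exp(-\Omega(q m / \log^4 n)) = \exp(-\Omega(q^3/\log^4 q))$, which now dominates $m \log n = \Theta(q^2 \log q)$. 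So the main obstacle, and the step to get exactly right, is choosing the martingale exposure (per-coin versus per-clique) adapted to whichever case of Lemma \ref{thm:cliques} is active, so that $\lambda^2$ beats $\sum c_i^2$ by a factor of at least $m \log n$; the arithmetic with the explicit constants $2^{24}$, $2^{40}$ and the requirement $q \ge 2^{40}$ is then routine bookkeeping.
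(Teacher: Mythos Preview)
Your framework is right --- per-coin Doob martingale, discard the large cliques, split into the two cases of Lemma~\ref{thm:cliques}, apply Hoeffding--Azuma --- and it matches the paper's approach. But there is a genuine gap: you never use the inequality
\[
\sum_i c_i^2 \;=\; \sum_{T}\sum_{v\in V(T)} (|V(T)|-1)^2 \;=\; \sum_T |V(T)|(|V(T)|-1)^2 \;\le\; c\cdot \sum_T |V(T)|(|V(T)|-1) \;=\; 2c\cdot e(\mathcal{U}),
\]
where $c=\max_T|V(T)|$ over the relevant family $\mathcal{U}\in\{\mathcal{S},\mathcal{M}\}$. This is the step that makes the arithmetic close. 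With $\lambda=\tfrac14 e(\mathcal{U})$ one gets $2\lambda^2/\sum c_i^2 \ge \lambda/(4c)=e(\mathcal{U})/(16c)$, and it is precisely this cancellation of $e(\mathcal{U})$ against itself that yields $m^{3/2}(\log n)/q$ in the small case ($c=\sqrt{2m}/\log n$) and $qm/\log^2 n$ in the medium case ($c=\sqrt{2m}$), each of which dominates $m\log n$.

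Your bound $\sum c_i^2 \le 2m\cdot v(\mathcal{S}\sqcup\mathcal{M})$ replaces $2c\cdot e$ by (effectively) $c^2\cdot v$, which is too lossy: even in the small-clique case, with $c=\sqrt{2m}/\log n$ and $v(\mathcal{S})\le (q+1)m$, you get an exponent of order $m^2(\log^2 n)/q^3 = \Theta(q\log^2 q)$, short of the required $m\log n=\Theta(q^2\log q)$. Your fallback of exposing medium cliques \emph{en bloc} does not rescue this either: you claim the resulting exponent is $\Omega(qm/\log^4 n)$, but the arithmetic actually gives $\Omega(q\sqrt{m}/\log^4 n)=\Theta(q^2/\log^4 q)$, again short of $q^2\log q$ (and your bound $|\mathcal{M}|=O(q\sqrt{m})$ is also missing a $\log n$ factor coming from the lower threshold $\sqrt{2m}/\log n$ for medium cliques). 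So the per-clique grouping is a red herring; the fix is to stay with per-coin exposure in \emph{both} cases and bound $\sum c_i^2$ by $2c\cdot e(\mathcal{U})$ rather than by $c^2$ times the number of coins.
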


\begin{proof}
It is convenient to use the notation of Section \ref{sec:cliquesection}, specifically $\mathcal{S}$ and $\mathcal{M}$ denote the small and medium cliques in $H_q[X]$, and $e(\mathcal{S})$ and $e(\mathcal{M})$ denote the number of edges of $H_q[X]$ in those cliques, respectively.
By Lemma \ref{thm:cliques}, either $e(\mathcal{S}) \geq m^2/64q$ or $e(\mathcal{M}) \geq qm^{3/2}/16\log^2 \! n$ edges, and we consider these two cases separately.
Let $Z_{\mathcal{S}}$ and $Z_{\mathcal{M}}$ be the number of edges of $H_q^*$ in small and medium
cliques respectively, so $Z_X \geq Z_{\mathcal{S}} + Z_{\mathcal{M}}$. Recall $H_q^*$ is a union over maximal cliques $T \subseteq H_q$ of complete bipartite graphs with parts $A_T$ and $B_T$, where $A_T \sqcup B_T$ is a random partition of $V(T)$ such that vertices are placed independently in $A_T$ or $B_T$ with probability $1/2$ each, independently for each maximal clique $T$ of $H_q$.

\bigskip

{\bf Case 1.} $e(\mathcal{S}) \geq m^2/64q$.
For a small maximal clique $T$ of $H_q$, and a vertex $v \in V(T)$, let $Z_{v,T} = 0$ if $v$ is placed in $A_T$ and let $Z_{v,T} = 1$ if $v$ is placed in $B_T$, and these random variables are independent. Let $Z = Z_{\mathcal{S}}$ and let $Y_1,Y_2,\dots,Y_k$ be an ordering of all the random variables $Z_{v,T}$ for $T \in \mathcal{S}$ and $v \in V(T)$, and let $\mathcal{F}_i$ be the $\sigma$-field generated by $Y_1,Y_2,\dots,Y_i$. Here $k = v(\mathcal{S})$ and $Z$ is a function of $k$ independent random variables $Z_{v,T}$ for $T \in \mathcal{S}$. Then $Z_i = \mathbb E(Z \; | \; \mathcal{F}_i)$ defines a martingale which terminates with $Z$. If $c_i = \max|Z_{i} - Z_{i - 1}|$, then by the Hoeffding-Azuma inequality, for any $\lambda \geq 0$,
\[ \mathbb P(Z - \mathbb E(Z) \leq -\lambda) \leq \exp\Bigl(-\frac{2\lambda^2}{\sum_{i = 1}^k c_i^2}\Bigl).\]
Since $c_i \leq |V(T)| - 1$ when $Y_i = Z_{v,T}$,
\[ \sum_{i = 1}^k c_i^2 \leq \sum_{T \in \mathcal{S}} \sum_{v \in V(T)} (|V(T)| - 1)^2 = \sum_{T \in \mathcal{S}} |V(T)|(|V(T)| - 1)^2.\]
As each clique in $\mathcal{S}$ is a small clique,
\[ \max\{c_i : 1 \leq i \leq k\} \leq \max\{|V(T)| : T \in \mathcal{S}\} \leq \sqrt{2m}/\log n,\]
so the martingale is $c$-Lipschitz with $c = \sqrt{2m}/\log n$, and
\begin{eqnarray*}
\sum_{i = 1}^k c_i^2 &\leq& \sum_{T \in \mathcal{S}} |V(T)|(|V(T)| - 1)^2 \\
&\leq& c \cdot \sum_{T \in \mathcal{S}} (|V(T)| - 1)^2 \\
&\leq& 2c \cdot \sum_{T \in \mathcal{S}} \textstyle{{|V(T)| \choose 2}} \; \; = \; \; 2c \cdot e(\mathcal{S}).
\end{eqnarray*}
We conclude
\[ \mathbb P(Z - \mathbb E(Z) \leq -\lambda) \leq \exp\Bigl(-\frac{\lambda^2}{c \cdot e(\mathcal{S})}\Bigr).\]
We select $\lambda =  \mathbb E(Z)/2$. Since $e(\mathcal{S}) \geq m^2/64q$, and $\mathbb E(Z) = e(\mathcal{S})/2$,
\[ \lambda  = \tfrac{1}{4}e(\mathcal{S})  \geq \displaystyle{\frac{m^2}{256q}} = 2^{40}q^3.\]
Using $\lambda/e(\mathcal{S}) = 1/4$ and $\lambda/4c \geq e(\mathcal{S})/16c \geq m^{3/2}\log n/(4096q)$,
\[ \mathbb P(Z \leq \lambda) = \mathbb P(Z - \mathbb E(Z) \leq -\lambda) \leq \exp\Bigl(-\frac{\lambda}{4c}\Bigr) \leq  \exp\Bigl(-\frac{m^{3/2}\log n}{4096q}\Bigr) = n^{-m}\]
where we used $m = 2^{24}q^2$. This completes Case 1.

\bigskip

{\bf Case 2.} $e(\mathcal{M}) \geq qm^{3/2}/16\log^2 \! n$. Let $Z = Z_{\mathcal{M}}$ and let
\[ \lambda = \frac{1}{2}\mathbb E(Z) = \frac{1}{4} e(\mathcal{M}).\]
Then $\lambda \geq 2^{40}q^3$ since $m = 2^{24}q^2$ and $q \geq 2^{40}$.
By the Hoeffding-Azuma inequality as in Case 1, with $c = \sqrt{2m}$ for the medium cliques in $\mathcal{M}$, we obtain
\begin{eqnarray*}
\mathbb P(Z \leq \lambda) &\leq& \exp\Bigl(-\frac{\lambda}{4c}\Bigr) \\
&\leq& \exp\Bigl(-\frac{qm}{1024\log^2\! n}\Bigr) \;\; \leq \; \;n^{-m}
\end{eqnarray*}
as $q \geq 2^{40}$ easily implies $q \geq 1024\log^3\! n$.
\end{proof}

\subsection{Proof of Theorem \ref{thm:pseudo}}\label{proofofpseudo}

Let $m = 2^{24}q^2$. Then, by Lemma \ref{thm:concentration}, the probability that a set $X \subseteq V(H_q^*)$ of size $m$
induces at most $2^{40}q^3$ edges of $H_q^*$ is at most $n^{-m}$. It follows that the expected number of such $X$ is at most ${n \choose m}n^{-m} < 1$,
and so there exists an instance $G_q^*$ of $H_q^*$ such that every set of $m$ vertices induces at least $2^{40}q^3$ edges. Fix such a $G_q^*$, and recall
$G_q^*$ is $K_4$-free with $q^2(q^2 - q + 1)$ vertices.  To prove Theorem \ref{thm:pseudo}, consider a set $X$ of at least $2^{24}q^2$ vertices. We count pairs $(e,Y)$ where $Y \subseteq X$ has size $m$ and $e$ is an edge of $G_q^*[Y]$.
On one hand, the number of such pairs is at least
\[ {|X| \choose m} \cdot 2^{40}q^3\]
by the choice of $G_q^*$, whereas the number of pairs $(e,Y)$ is also exactly
\[ e(G_q^*[X]) \cdot {|X| - 2 \choose m - 2}.\]
We conclude that for every set $X$ of size at least $m$ in $G_q^*$,
\begin{eqnarray*}
 e(G_q^*[X]) &\geq& 2^{40}q^3 \frac{|X|^2}{m^2} \\
 &\geq& \frac{|X|^2}{256q}. \end{eqnarray*}
This proves Theorem \ref{thm:pseudo}. $\Box$

\section{Randomly sampling from $G_q^*$}\label{sec:proofofmain}

In this section, we prove Theorem \ref{thm:main} using the graph $G = G_q^*$ guaranteed by Theorem \ref{thm:pseudo}, a theorem in Section \ref{sec:container} for counting independent sets,
and random sampling as in~\cite{MV}. The theorem on counting independent sets is based on early work of Kleitman and Winston~\cite{KW}.

\subsection{Counting independent sets}\label{sec:container}

The following is found in Kohayakawa, Lee, R\"{o}dl and Samotij~\cite{KLRS}, and is a special case of the
method of {\em containers} due to Balogh, Morris and Samotij~\cite{BMS} and Saxton and Thomason~\cite{ST}:

\begin{proposition}\label{thm:containerforgraphs}
	Let $G$ be a graph on $n$ vertices, and let $r,R \in \mathbb{N}$, and $\alpha \in [0,1]$  satisfy:
	\begin{equation}
		e^{-\alpha r}n \leq R, \label{eq:graphcondition1}
	\end{equation}
	and, for every subset $X \subseteq V(G)$ of at least $R$ vertices,
	\begin{equation}
		2e(X) \geq \alpha|X|^2. \label{eq:graphcondition2}
	\end{equation}
	Then the number of independent sets of size $t \geq r$ in $G$ is at most
	\begin{equation}\label{eq:finalcount}
 {n \choose r} {R \choose t - r}.
 \end{equation}
\end{proposition}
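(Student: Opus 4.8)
The plan is to run the standard Kleitman--Winston / container argument in its "online greedy" form, building a container set $\mathcal{C}(I)$ for each independent set $I$ by a deterministic max-degree algorithm, and then bounding the number of possible $(\text{fingerprint},\text{container})$ pairs. First I would fix an arbitrary linear order on $V(G)$. Given an independent set $I$ with $|I|=t\geq r$, I process vertices in $r$ rounds: maintain an "available set" $A$, initially $V(G)$, and at each round $j=1,\dots,r$ let $w_j$ be the vertex of maximum degree in $G[A]$ (breaking ties by the fixed order), record whether $w_j\in I$, and if so move $w_j$ into the fingerprint set $F$ and delete $w_j$ together with all its $G[A]$-neighbours from $A$; if $w_j\notin I$, simply delete $w_j$ from $A$. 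Crucially, since $w_j$ is always a maximum-degree vertex of the current $A$ and $I\cap A$ is independent, the set of vertices of $I$ \emph{not} chosen into $F$ after $r$ rounds is determined by $F$ alone, because the algorithm's choices depend only on $G$, the order, and the answers "is $w_j\in I$?", and those answers are recoverable from $F$. Thus $I$ is reconstructible from the pair $(F,A_r)$, where $A_r$ is the available set after $r$ rounds: $F\subseteq A_r^c$ has size at most $r$, and $I\setminus F\subseteq A_r$.

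The key quantitative step is to show $|A_r|\leq R$ whenever $R < |A_r|$ would be contradicted by the density hypothesis. I would argue as follows: if at some round $j$ the available set $A$ still has $|A|\geq R$, then by hypothesis \eqref{eq:graphcondition2} we have $2e(G[A])\geq \alpha|A|^2$, so $G[A]$ has average degree at least $\alpha|A|$, hence the maximum degree vertex $w_j$ has $\deg_{G[A]}(w_j)\geq \alpha|A|\geq\alpha R$. Therefore each round in which $w_j\in I$ removes at least $\alpha R$ vertices from $A$ (beyond $w_j$ itself), while a round with $w_j\notin I$ removes one vertex. If the process runs all $r$ rounds while $|A|$ never drops below $R$, then... actually the cleaner bound: as long as $|A|\geq R$, each of the at most $r$ rounds deletes at least one vertex, and I want instead to track the number of $I$-rounds. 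A standard way: either $|A_r| \le R$, or all $r$ rounds occurred with $|A|\ge R$ throughout; in the latter case each round $j$ with $w_j \in I$ deleted $\ge \alpha R$ vertices and there can be at most $|F|\le r$ such rounds — this does not immediately finish, so I would instead use the inequality \eqref{eq:graphcondition1} directly by showing that after $r$ rounds, if we always had $|A|\ge R$, the expected/deterministic shrinkage forces $|A_r| \le e^{-\alpha r} n \le R$. Concretely, by an averaging step one shows in each round the chosen $w_j$ has $G[A]$-degree $\ge \alpha |A|$, so running the deletion even in the worst case (only counting $w_j \in I$ rounds as multiplicative shrinkage is the subtle point) gives a factor $(1-\alpha)$ per round or a telescoping product bounding $|A_r|$ by $n\prod(1-\alpha)\le n e^{-\alpha r}\le R$.

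Given $|A_r|\le R$ (or $A_r^c \supseteq$ everything outside a set of size $\le R$), the counting is immediate: the fingerprint $F$ is a subset of $V(G)$ of size at most $r$, and — since the algorithm is deterministic — $F$ determines $A_r$, and then $I$ is determined by $F$ together with the choice of $I\setminus F$ inside a set of size at most $R$. Hence the number of independent sets of size $t$ is at most the number of choices of $F$ times the number of choices of $I\setminus F$, i.e.\ at most $\binom{n}{r}\binom{R}{t-r}$, which is \eqref{eq:finalcount}. (One typically takes $|F|$ to be \emph{exactly} $r$ by padding — if fewer than $r$ $I$-vertices are ever selected, one can extend $F$ arbitrarily or note the count is dominated by the stated binomial; this is a routine adjustment.)

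The main obstacle I anticipate is the shrinkage estimate in the second paragraph: making precise the claim that staying above the threshold $R$ for $r$ rounds forces $|A_r|$ below $R$ via the multiplicative bound $n\,e^{-\alpha r}$, since only the rounds with $w_j\in I$ produce large deletions while rounds with $w_j\notin I$ delete a single vertex. The resolution is that one does not need each individual round to shrink $A$ by a factor $(1-\alpha)$; rather, one runs the algorithm and observes that the process \emph{terminates} (drops below $R$) after at most $r$ rounds precisely because $\alpha R \cdot (\#\,I\text{-rounds}) + (\#\,\text{non-}I\text{-rounds}) $ exhausts the $\le n$ vertices, combined with hypothesis \eqref{eq:graphcondition1} calibrating $r$ so that $e^{-\alpha r} n \le R$. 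Everything else — the determinism of the fingerprint-to-container map and the final binomial count — is bookkeeping.
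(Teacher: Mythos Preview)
Your overall strategy is the right one and matches the paper's (and the Kleitman--Winston/KLRS) approach: build a fingerprint by repeatedly peeling off max-degree vertices, then count fingerprints times choices inside the resulting small container. The paper's sketch selects $r$ vertices of $I$ in sequence, each time deleting the chosen vertex and its neighbours, and uses that each such deletion shrinks the available set by a factor $(1-\alpha)$.

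However, the specific algorithm you describe has a genuine gap in the shrinkage step, and your own attempted resolutions do not close it. You run for \emph{exactly $r$ rounds}, where in each round you examine a single max-degree vertex $w_j$ and delete either $\{w_j\}\cup N(w_j)$ or just $\{w_j\}$ according to whether $w_j\in I$. But there is no reason the max-degree vertex should lie in $I$: it is perfectly possible that $w_1,\dots,w_r\notin I$, in which case $|A_r|=n-r$, far larger than $R$. Your additive bookkeeping $\alpha R\cdot(\#\,I\text{-rounds})+(\#\,\text{non-}I\text{-rounds})$ does not help, since the number of $I$-rounds could be zero; and the claimed ``factor $(1-\alpha)$ per round'' is simply false for non-$I$ rounds. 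The padding remark at the end is also irrelevant, because the problem is not that $|F|$ may be small but that $|A_r|$ may be large.

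The fix is a small but essential change to the loop structure: do not cap the number of rounds at $r$; instead, keep removing max-degree vertices from $A$ (one at a time) \emph{until $r$ of them have landed in $I$}. Equivalently, each ``round'' consists of deleting successive max-degree vertices until the current max-degree vertex lies in $I$, then adding that vertex to $F$ and deleting it together with its neighbours. Since $I\setminus F\subseteq A$ throughout, and $|I|=t\ge r$, the process always finds a next $I$-vertex. At the moment an $I$-vertex $v$ is selected it is the max-degree vertex of the current set $A'$, so (as long as $|A'|\ge R$) it has at least $\alpha|A'|$ neighbours there, and deleting $v$ and its neighbours yields a set of size at most $(1-\alpha)|A'|\le(1-\alpha)|A|$; the intervening non-$I$ deletions only help. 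Hence after $r$ $I$-selections $|A|\le(1-\alpha)^r n\le e^{-\alpha r}n\le R$. Now $|F|=r$ exactly, $F$ (as an ordered, hence unordered, set) determines the final $A$, and $I\setminus F\subseteq A$ with $|A|\le R$, giving the bound $\binom{n}{r}\binom{R}{t-r}$. This is precisely what the paper's outline is doing when it says ``select the vertices of an independent set one by one'' and obtains the factor $(1-\alpha)$ at each of the $r$ steps.
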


\medskip
For completeness, we briefly outline the proof of this result: to count independent sets,
we may select the vertices of an independent set one by one, and then delete their neighbors. If $X$ is the set of
vertices remaining in $G$ at any particular stage in the process, and $|X| \geq R$, then there exists a vertex $v$ in $X$
with at least $\alpha |X|$ neighbors in $X$, due to (\ref{eq:graphcondition2}). Removing $v$ and its neighbors, we
arrive at a new set $X'$ of remaining vertices satisfying $|X'| \leq (1 - \alpha)|X|$. If we repeat this $r$ times,
there are at most $(1 - \alpha)^r n \leq e^{-\alpha r}n \leq R$ vertices left to select from, and at that stage
we select the remaining $t - r$ vertices of the independent set.

\subsection{The proof of Theorem \ref{thm:main}}

Let $G = G_q^*$ be the graph guaranteed by Theorem \ref{thm:pseudo}, so $G$ is a $K_4$-free graph with $n = q^2(q^2 - q + 1)$ vertices such that
for every set $X$ of at least $2^{24}q^2$ vertices of $G$, $e(X) \geq |X|^2/256q$, for each large enough prime power $q$. This allows us to apply Proposition \ref{thm:containerforgraphs} to $G$.

\bigskip

The main claim is that the number of independent sets of size $t = 2^{30}q\log^2\!q$ in $G$ is at most $(q/\log^2 \! q)^t$.
To prove this, let $R = 2^{24}q^2$, $r = 1024q\log q$, and $\alpha = 1/256q$ in Proposition \ref{thm:containerforgraphs}.
Then $\exp(-\alpha r) n = e^{-4\log q} n \leq q^{-4}n \leq R$ so (\ref{eq:graphcondition1}) is satisfied.
Since $e(X) \geq \alpha |X|^2$ for all $X \subseteq V(G)$ with $|X| \geq R$, (\ref{eq:graphcondition2}) is also satisfied.
By (\ref{eq:finalcount}),  the number of independent sets of size $t$ in $G$ is at most
\[ {n \choose r} {R \choose t - r} \leq n^r {R \choose t} \leq q^{4r} \Bigl(\frac{4R}{t}\Bigr)^t\]
using the bound ${x \choose y} \leq (4x/y)^y$ for integers $x \geq y \geq 1$, and using $n \leq q^4$.
Using $q^{4r} \leq e^{t/2} \leq 2^t$ and $4R/t \leq q/2\log^2 \! q$, this is at most $(q/\log^2 \! q)^t$, which proves the claim.

\bigskip

Finally, we prove Theorem \ref{thm:main} using the claim. Randomly sample a set $V$ of vertices of $G$ with probability $\log^2 \! q/q$ independently for each vertex. If $I$ is the number of independent sets of size $t$ in $G[V]$, then $\mathbb E(I) \leq 1$ and therefore
\[ \mathbb E(|V| - I) \geq \frac{n\log^2 \! q}{q} - 1 \geq \frac{q^3 \log^2 \! q}{2}\]
since $n = q^2(q^2 - q + 1) \geq q^4/2 + q$. In particular, there exists $V \subseteq V(G)$ such that $G[V]$ is a $K_4$-free graph on at least $q^3\log^2 \! q/2$ vertices containing no independent set of size at least $t$. Since this is valid for any large enough prime $q$, and there is a prime $q$ between
any positive integer and its double by Bertrand's Postulate, this shows that there exists an absolute constant $c_1 > 0$ such that
 $r(4,t) \geq c_1 t^3/\log^4 \! t$ for all $t \geq 3$, proving Theorem \ref{thm:main}. $\Box$

 \section{Proof of Theorem \ref{thm:main2}}\label{sec:proofofmain2}

We use the random blowup approach of Kim and Mubayi alluded to in the work of Alon and R\"{o}dl~\cite{AR}.
Let $G_t$ denote a $K_4$-free graph with no independent sets of size at least $s = \lfloor t/\log t\rfloor$, where
$|V(G_t)| = T = \Omega(t^3/\log^7\! t)$, guaranteed by Theorem \ref{thm:main}. The {\em $r$-blowup} $G_t(r)$ of $G_t$ is the graph obtained
by replacing each vertex $x$ of $G_t$ with an independent set $I_x$ of size $r$  and each edge $\{x,y\}$ of $A_t$ by a complete bipartite graph between $I_x$ and $I_y$.
We shall set
\[ r = \Big\lceil \frac{\delta_k t^{2(k - 2)}}{(\log t)^{6k-13}}\Big\rceil\]
where $\delta_k > 0$ is a constant to be chosen shortly.
Alon and R\"{o}dl~\cite{AR} observed that the number of independent sets of size $t$ in $G_t(r)$ is at most
\[ \frac{{T \choose s}(s r)^t}{t!}.\]
For a permutation $\sigma$ of $V(G_t(r))$, let $G_t(r,\sigma)$ denote the copy of $G_t(r)$ with vertex set $\sigma(V(G_t(r)))$. For $k \geq 3$, taking
$k - 1$ independent random permutations $\sigma_1,\sigma_2,\dots,\sigma_{k - 1}$ of $V(G_t)$, let $G(k)$ be the graph with vertex set $V(G_t(r))$ and edge set
\[ E(G(k)) = \bigcup_{i = 1}^k E(G_t(r,\sigma_i)).\]
Each $G_t(r,\sigma_i)$ plays the role of the $i$th color in a $k$-coloring of the edges of $K_{rT}$, and the edges of $E(K_{rT}) \backslash E(G(k))$
form the last color. In the event that an edge is in more than one of the graphs $G_t(r,\sigma_i)$ -- in other words there is a choice of colors for the edge -- we arbitrarily assign one of the colors to the edge. The expected number of independent sets of $t$ vertices in $G(k)$ is precisely
\[ \Bigl(\frac{{T \choose s}(s r)^{t}}{t!}\Bigr)^{k - 1} {rT \choose t}^{-(k - 2)}.\]
Using $T \leq t^3$ and the bounds
\[ {T \choose s} \leq T^s \leq t^{3s} \leq e^{3t} \quad \mbox{ and }\quad {rT \choose t} \geq (rT/t)^t \geq (crt^2/\log^7\!t)^t\]
for some constant $c > 0$
and $t! \geq (t/e)^t$, the above expression is at most
\[   C_k^t \cdot \Bigl(\frac{sr}{t}\Bigr)^{(k - 1)t} \cdot \Bigl(\frac{rt^2}{\log^7\! t}\Bigr)^{-(k - 2)t}
\leq C_k^t \cdot r^t \cdot t^{-2(k - 2)t} \cdot (\log t)^{(6k - 13)t}\]
for some constant $C_k > 0$. If $\delta_k < 1/C_k$, then the choice of $r$ ensures this quantity is less than 1. Consequently, there exists a graph $G(k)$ with $rT$ vertices that is the union
of $k - 1$ copies of $K_4$-free graphs and $G(k)$ has no independent set of size $t$. Consequently, for each $k \geq 3$, there exists $\gamma_k > 0$ such that
\[ r_k(4;t) > rT \geq \gamma_k \frac{t^{2k - 1}}{(\log t)^{6(k - 1)}}.\]
This completes the proof of Theorem \ref{thm:main2}. $\Box$

\section{Concluding remarks}

$\bullet$ {\bf Asymptotics of $r(4,t)$.} To prove $r(4,t) = \Omega(t^3/\log^2\! t)$, it would be enough to prove that for some constants $c, C > 0$, the number of independent sets of size $t = cq\log q$ in $G_q^*$ is at most $(Cq^2/t)^t$. However, the technical condition (\ref{eq:graphcondition1}) precludes
an application of Proposition \ref{thm:containerforgraphs}. This loss of logarithmic factors appears to occur also in~\cite{MV} when counting
independent sets using spectral methods. A survey of counting independent sets in graphs is given by Samotij~\cite{Sam}.
Nevertheless, we believe  $r(4,t)$ has order $t^3/\log^2\! t$, and that $G_q^*$ may indeed have at most $(Cq^2/t)^t$ independent sets of size $t = cq\log q$ for some
constants $c,C > 0$:

\begin{conjecture}
As $t \rightarrow \infty$,
\[ r(4,t) = \Theta\Bigl(\frac{t^3}{\log^2\! t}\Bigr).\]
\end{conjecture}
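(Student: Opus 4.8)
Towards this conjecture, observe first that the upper bound $r(4,t) = O(t^3/\log^2\!t)$ is already supplied by (\ref{rsnupper}), so what is at stake is the matching lower bound $r(4,t) = \Omega(t^3/\log^2\!t)$. The natural plan is to reuse the graph $G_q^*$ of Theorem \ref{thm:pseudo} but to count its independent sets at the critical scale more sharply than Section \ref{sec:proofofmain} does: concretely, to prove that for some absolute constants $c, C > 0$ and $t = \lceil c\,q\log q\rceil$, the number of independent sets of size $t$ in $G_q^*$ is at most $(Cq^2/t)^t$. Granting this, the endgame is exactly as in Section \ref{sec:proofofmain}: sample each vertex of $G_q^*$ independently with probability $p = c\log q/(2Cq)$, so that the expected number of surviving independent sets of size $t$ is at most $(Cq^2 p/t)^t = 2^{-t}$, while the expected number of surviving vertices is $(1+o(1))\,np = \Omega(q^3\log q)$. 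Deleting one vertex from each independent set of size $t$ in a typical sample produces a $K_4$-free graph on $\Omega(q^3\log q)$ vertices with independence number below $t$, and taking $q$ to be a prime power with $q = \Theta(t/\log t)$ (Bertrand's postulate) yields $r(4,t) = \Omega(q^3\log q) = \Omega(t^3/\log^2\!t)$.

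Thus everything reduces to the independent-set count, and the reason Theorem \ref{thm:main} stops at $t^3/\log^4\!t$ is the obstruction flagged in the preceding remark, namely condition (\ref{eq:graphcondition1}) of Proposition \ref{thm:containerforgraphs}. The pseudorandom estimate $e(X) \ge |X|^2/(256q)$ of Theorem \ref{thm:pseudo} holds only once $|X| \ge R$ with $R$ of order $q^2$, and it genuinely fails below that scale — indeed $H_q$ has independent sets of size $\Theta(q^2)$, as the Hoffman bound applied to its eigenvalue $-q-1$ shows. Hence the Kleitman--Winston/container process behind Proposition \ref{thm:containerforgraphs} must run for $r = \Theta(q\log q)$ steps to pare $n \approx q^4$ down to $R$, each step removing only an $\alpha = 1/(256q)$-fraction, and the resulting factor ${n \choose r} = q^{\Theta(q\log q)}$ in (\ref{eq:finalcount}) is harmless only when $t = \Theta(q\log^2\!q)$; keeping it negligible forces the working scale $t$ up by a factor $\log q$, which is precisely what costs Theorem \ref{thm:main} an extra factor of order $\log^2\!t$. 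Any argument that uses only the quadratic edge density of $G_q^*$ — including the spectral method of \cite{MV} — appears to incur the same loss.

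To overcome this I would exploit the clique decomposition of $G_q^*$ directly rather than treat it as a black box. An independent set $I$ of $G_q^*$ is constrained only clique by clique: on each maximal clique $T \in \mathcal{C}$ of $H_q$ it must lie inside $A_T$ or inside $B_T$. The plan is to run a Kleitman--Winston-type selection intertwined with a sweep over the cliques that $I$ meets, disposing of the large cliques first — where the random bipartition already gains a constant factor on each — and of the small and medium ones using the separation exploited in Lemma \ref{thm:cliques}, so that the total ``branching cost'' is $e^{O(t)} = C^t$ rather than the $q^{\Theta(q\log q)}$ paid by the generic process; this would replace the offending ${n \choose r}$ factor by a harmless $C^t$ and leave $(Cq^2/t)^t$. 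The main obstacle is plain: well below scale $q^2$ the edge count alone carries too little information, so one must inject genuine structure of the Hermitian unital — the absence of O'Nan configurations, the strong regularity of $H_q$, its explicit spectrum — into the counting, and it is unclear whether any currently available container-type or spectral input is strong enough. A new idea about the fine structure of unital graphs at intermediate scales seems to be needed, which is why this is left as a conjecture.
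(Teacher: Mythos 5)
This statement is a conjecture that the paper explicitly leaves open; the paper supplies no proof, and neither do you, as you acknowledge in your closing sentences. What you have written is an accurate diagnosis of the obstruction rather than a proof. Your first two paragraphs track the paper's own concluding remarks almost verbatim: the upper bound is already (\ref{rsnupper}); the lower bound would follow from a count of at most $(Cq^2/t)^t$ independent sets of size $t = \Theta(q\log q)$ in $G_q^*$ followed by the same sampling argument as Section \ref{sec:proofofmain}; and condition (\ref{eq:graphcondition1}) blocks this because the density estimate of Theorem \ref{thm:pseudo} only applies once $|X| \geq R = \Theta(q^2)$, so one must take $r = \Theta(q\log q)$ container steps, and the ${n \choose r}$ factor in (\ref{eq:finalcount}) is negligible only once $t = \Theta(q\log^2 q)$. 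All of this is correct and is precisely what the authors say.

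Where the proposal stops short of a proof is the third paragraph. The ``plan'' of a clique-by-clique Kleitman--Winston sweep with branching cost $e^{O(t)}$ is a hope, not an argument: no mechanism is given for why knowing the side $A_T$ or $B_T$ in each of the $\Omega(q\log q)$ cliques that an independent set of size $t$ may meet would cost only a constant factor per element of the independent set rather than per clique, and below scale $q^2$ the quadratic edge count genuinely carries no information, as you note. So the proposal correctly reduces the problem, correctly identifies the bottleneck, and then (rightly) declines to claim a proof. One small technical slip: the Hoffman bound gives an \emph{upper} bound $\alpha(H_q) \leq q^2 - q + 1$; it does not by itself ``show'' that $H_q$ has independent sets of size $\Theta(q^2)$. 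The relevant point for the argument, which is true, is only that the estimate of Theorem \ref{thm:pseudo} cannot be pushed below scale $\Theta(q^2)$, so $R$ cannot be lowered by a better pseudorandomness bound alone.
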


\bigskip

$\bullet$ {\bf Spectral approach.} A key part of the proof of Theorem \ref{thm:main} is the pseudorandomness of the graphs $H_q^*$, as stated in Theorem \ref{thm:pseudo}. The non-trivial eigenvalues of the adjacency matrix of the graph $H_q$ are $q^2-q-2$ and $-q-1$ with multiplicities $q^3$ and $(q^2-q-1)(q^2-q+1)$ respectively. A one-sided version of the expander mixing lemma~\cite{Alon}, see for instance Theorem 3.5 in Haemers~\cite{Haemers}, shows
\[ 2e(X) \geq \frac{|X|^2}{2q} - (q + 1)|X|\]
for all sets $X \subseteq V(H_q)$, and so $H_q$ itself is pseudorandom. Unfortunately, as pointed out to us by Carl Schildkraut, the smallest eigenvalue of the random subgraph $H_q^*$ is of order at most $-q^2$ with high probability: a typical clique in $H_q$ is partitioned into two roughly equal parts, and the complete bipartite graph with those parts has smallest eigenvalue of order $-q^2$, which by Cauchy interlacing shows the same for $H_q^*$.

\bigskip

$\bullet$ {\bf Optimal pseudorandom graphs.} Another salient open problem is to determine whether there exists
a $K_4$-free $(n,d,\lambda)$-graph with $\lambda = O(\sqrt{d})$ and $d = \Omega(n^{4/5})$; this problem remains open. Such a graph would be optimal in the sense of the bounds of Sudakov, Szab\'{o} and Vu~\cite{SSV}.  As shown by Mubayi and the second author~\cite{MV}, this would imply the same
lower bound on $r(4,t)$ as in Theorem \ref{thm:main}. The graph $H_q$ is not an optimal pseudorandom graph as the non-trivial eigenvalues of the adjacency matrix of $H_q$ are $q^2-q-2$ and $-q-1$, with multiplicities $q^3$ and $(q^2-q-1)(q^2-q+1)$ respectively. These are determined from the theory of strongly regular graphs -- see Brouwer and Van Maldeghem~\cite{BV22}. Since $H_q$ is not an optimal pseudorandom graph, or even a weakly optimal pseudorandom graph in the sense of He and Wigderson~\cite{HeW}, it seems unlikely that $H_q^*$ is an optimal or weakly optimal pseudorandom graph; moreover $H_q^*$ is generally not a regular graph.

\bigskip

$\bullet$ {\bf Graph Ramsey numbers.} Given a graph $F$, define $r(F,t)$ to be the minimum $n$ such that every $F$-free $n$-vertex graph contains an independent set
of size $t$. In this paper, we studied $r(4,t)$ -- the case $F = K_4$ -- but we believe that the methods of our paper could be fruitful for other graph Ramsey numbers
$r(F,t)$. For instance, for the well-studied cycle-complete graph Ramsey numbers $r(C_k,t)$ when $k$ is odd -- see Sudakov~\cite{Sudakov} for upper bounds. There are a number of dense constructions of
$C_k$-free graphs with many rich properties to which the methods of this paper -- in particular, counting independent sets via containers -- may apply, such as the graphs defined by Margulis~\cite{Margulis}, the {\em Ramanujan graphs} of Lubotzky, Phillips and Sarnak~\cite{LPS}, the high girth graphs of Lazebnik, Ustimenko and Woldar~\cite{LUW}, and graphs from generalized polygons -- see~\cite{MV}. A survey of extremal problems for cycles in graphs is given in~\cite{V}. Recent work
of Conlon, Mubayi and the authors~\cite{CMMV} gives good lower bounds on cycle-complete graph Ramsey numbers.
We also believe the approach in this paper should also help with estimates of the {\em Erd\H{o}s-Rogers functions}~\cite{DR,DRR,W,KMV,C,MV,GJ} -- recent progress was obtained along these lines
by Janzer and Sudakov~\cite{JS} using the results of this paper.

 \bigskip

$\bullet$ {\bf Extremal graph theory.} The bipartite incidence graph $B_q$ of points $P$ in $\cal H$ and secants $L$ to the Hermitian unital $\cal H$ is in fact a near extremal bipartite graph not containing a 1-subdivision of $K_4$ with four vertices in $P$ and six in $L$ -- this is a {\em Zarankiewicz-type problem}~\cite{Z}. Let $F_4$ be a 1-subdivision of $K_4$, and define
$z(n,m,F_4)$ to be the maximum number of edges in an $n$ by $m$ bipartite graph which does not contain $F_4$ with four vertices in the part of size $n$ and six in the part of size $m$.  It is possible via counting arguments (adapting those of Conlon, Janzer and Lee~\cite{CJL}) to show that for $n \geq m \geq 2$,
\[ z(n,m,F_4) = O(m^{3/5}n^{4/5}).\]
When $G = B_q$, we have $m = q^3 + 1$ and $n = q^2(q^2 - q + 1)$ and $e(G) = (q + 1)n = q^5 + o(q^5)$, which matches the order of magnitude of the upper bound on $z(n,m,F_4)$ as $q \rightarrow \infty$. The extremal problem for subdivisions has been studied extensively -- see Conlon, Janzer and Lee~\cite{CJL} and the references therein. One of their conjectures implies that if $F_s$ is a 1-subdivision of $K_s$, then $z(n,n,F_s) = \Theta(n^{3/2 - 1/(4s - 6)})$, and this conjecture
remains open for $s \geq 4$. If this conjecture is true, and in addition extremal graphs contain no cycles of length four, then as in the work of Conlon, Mubayi and the authors~\cite{CMMV}, $r(s,t) = \widetilde{\Theta}(t^{s - 1})$ as $t \rightarrow \infty$. It may therefore be more fruitful for lower bounds on $r(s,t)$ for $s \geq 5$ to look for lower bounds on $z(m,n,F_s)$ i.e.\ for a near extremal $m$ by $n$ bipartite graph with no 1-subdivision of $K_s$, for appropriate values of $m$ and $n$, as was done in this paper for $s = 4$.

\bigskip
\bigskip

{\bf Acknowledgements.} We would like to acknowledge helpful comments from Anurag Bishnoi, Boris Bukh, David Conlon, Christian Elsholtz, Simon Griffiths, Ferdinand Ihringer, Yusheng Li, Dhruv Mubayi, Jiaxie Nie, Carl Schildkraut, and Lutz Warnke, and we are grateful to an anonymous referee for detailed comments which helped improve the paper.

\end{document}